\keywords{initial algebra, terminal coalgebra, algebraicly complete category, finitary functor}
\theoremstyle{plain} 
\newcommand\op{\operatorname{op}}
\newcommand\card{\operatorname{card\,}}
\newcommand\Set{\operatorname{\bf Set}}
\renewcommand\Vec{\operatorname{\bf Vec}}
\newcommand{\id}{\operatorname{id}}
\newcommand{\Id}{\operatorname{Id}}
\newcommand{\Coalg}{\operatorname{\bf Coalg}}
\newcommand\colim{\operatorname{\it colim}}
\newcommand{\Ord}{\operatorname{Ord}}
\newcommand{\Nom}{\operatorname{\bf Nom}}
\newcommand\ck{\mathcal {K}}
\newcommand\ca{\mathcal {A}}
\newcommand\cd{\mathcal {D}} 
\newcommand\cp{\mathcal {P}} 
\newcommand\cf{\mathcal {F}} 
\newcommand\N{\mathbb{N}} 
\newcommand\ba{\mathbb{A}} 
\theoremstyle{defC}
\newtheorem{exasC}[thm]{Examples}
\theoremstyle{thmC}
\newtheorem{corC}[thm]{Corollary}
\numberwithin{equation}{section}
\begin{document}

\title{Algebraic Cocompleteness and Finitary Functors}

\author[J. Ad{\'{a}}mek]{Ji{\v{r}}{\'{\i}}  Ad{\'{a}}mek$^*$}

\address{Department of Mathematics, Czech Technical University in Prague, Czech Republic
\newline Institute of Theoretical Computer Science, Technical University Braunschweig, Germany} 
\email{j.adamek@tu-bs.de}

\begin{abstract}
\noindent
A number of categories is presented that are  algebraically complete and cocomplete, i.e., every endofunctor has  an initial algebra and a terminal coalgebra. Example: assuming GCH, the category $Set_{\leq \lambda}$ of sets of power at most $\lambda$ has that property, whenever
$\lambda$ is an uncountable regular cardinal.

For all finitary (and, more generally, all precontinuous) set functors  the initial algebra and terminal coalgebra  are proved to carry a canonical partial order with the same ideal completion. And they also both  carry a canonical ultrametric with the same Cauchy completion.
Finally, all endofunctors of the category $\Set_{\leq \lambda}$ are finitary if  $\lambda$ has countable cofinality 
and there are no measurable cardinals $\mu \leq \lambda$.
\end{abstract}

\maketitle

\section{Introduction}

\footnote{$^*$Supported by the Grant Agency of the Czech Republic under the Grant 19-00902S.}
The importance of terminal coalgebras for an endofunctor $F$ was clearly demonstrated by Rutten \cite{R}: for state-based systems whose state-objects lie in a category $\ck$ and whose dynamics are described by $F$, the terminal coalgebra $\nu F$ collects behaviors of individual states. And given a system $A$ the unique coalgebra homomorphism from $A$ to $\nu F$ assigns to every state its behavior. However, not every endofunctor has a terminal  coalgebra. Analogously, an initial algebra $\mu F$ need not exist.

Freyd \cite{F} introduced the concept of an algebraically complete category: this means  that every endofunctor has an initial algebra. More than a decade prior to Freyd's lecture Trnkov\'a proved that the category  $\Set_{\leq \aleph_0}$ of countable sets and mappings is
algebraically complete \cite{T}, Thm.2. This has inspired Koubek and the author to prove that for every cardinal
$\lambda$ the categories
 $$
Set_{\leq \lambda}
$$
of sets of power at most $\lambda$ and 
 $$
K\mbox{-}\Vec_{\leq \lambda}
$$
of vector spaces of dimension at most $\lambda$, for any  field $K$, are algebraically complete \cite{AK-79}, Example 14.
The algebraic completeness of the category of classes and maps has been proved in \cite{AMV}.

 We dualize the above concept, and call a category \textit {algebraically cocomplete} if every endofunctor has a terminal coalgebra. Assuming the generalized continuum hypothesis (GCH), we prove below that for every uncountable, regular cardinal $\lambda$ the category $\Set_{\leq \lambda}$ is algebraically complete and cocomplete. (That is, every endofunctor $F$ has both $\mu F$ and $\nu F$.) In contrast, the category of countable sets is not algebraically cocomplete (Example 2.15). And the algebraic cocompleteness of the category $\Set_{\leq \aleph_1}$ is proved to be \textit{equivalent} to the continuum hypothesis (Corollary 2.16). 
 
 Further examples of algebraically complete and cocomplete categories, assuming GCH, are $K\mbox{-}\Vec_{\leq \lambda}$ for regular infinite cardinals
 $\lambda > |K|$ and
$$
\operatorname{\bf Nom}_{\leq \lambda}
$$
 the category of nominal sets of cardinality at most $\lambda $, for regular cardinals $\lambda > \aleph_1$.  And if $G$ is a group with $2^{|G|} <\lambda$, then the category
$$
G\mbox{-}\Set_{\leq \lambda}
$$
of $G$-sets (sets with an action of $G$)  of cardinality at most $\lambda$ is algebraically 
 complete and cocomplete.

If we work in the category of $cpo$'s as our base category,  then Smyth and Plotkin proved in \cite{SP}
 that the initial algebra coincides with the terminal coalgebra for all locally continuous endofunctors. That is, the underlying objects are equal, and the structure maps are inverse to each other.
Is there a connection between initial algebras $\mu F$ and terminal coalgebras $\nu F$ for set functors $F$, too?
We concentrate on \textit{precontinuous} set functors which is a generalization of finitary set functors encompassing also all continuous functors (preserving limits of $\omega^{\op}$-chains) and closed under arbitrary
 products, subfunctors, coproducts and composition. Each precontinuous functor has a terminal coalgebra $\nu F$ which carries a canonical partial order, and an initial algebra that, as a subposet of 
$\nu F$, has the same ideal \textit{cpo}-completion whenever $F\emptyset \ne \emptyset$. Consquently, assuming GCH, if the initial algebra is uncountable, it has the same cardinality as the terminal coalgebra.

 And, analogously, assuming GCH the initial algebra and  terminal coalgebra of a precontinuous functor with $F\emptyset \ne \emptyset$ carry a canonical ultrametric such that the Cauchy completions of $\mu F$ and $\nu F$ coincide. This complements the result of Barr \cite{B} that for every finitary, continuous set functor with $F\emptyset \ne \emptyset$ the metric space $\nu F$ is the Cauchy completion of $\mu F$. 

Finitary functors are also a topic of the last section devoted  to non-regular cardinals: if $\lambda$ is a cardinal of countable cofinality, we prove that every endofunctor of $\Set_{\leq \lambda}$ is  finitary, assuming that  no measurable cardinal
is smaller or equal to $\lambda$. For example for $\lambda = \aleph_\omega$. Surprisingly, $\Set_{\leq\aleph_\omega}$ is not algebraically complete.

\vskip 2mm
\noindent
\textbf{Related Work}  This paper extends results of the paper \cite{A1} presented at the conference CALCO 2019. The result about the ideal $cpo$-completion of the initial algebra and the last section on non-regular cardinals are new. In \cite{A1} a proof was presented that assuming GCH, a set functor $F$ having $\mu F$ of uncountable  regular  cardinality has $\nu F$ of the same cardinality. Unfortunately, the proof was not correct. We present in Corollary~\ref{C:fin} a different proof assuming $F$ is precontinuous.
 \vskip2mm
 \noindent
 \textbf{Acknowledgement} The author is grateful to the referees for a number of very useful comments. And to George Janelidze, Stefan Milius, Larry Moss and Walter Tholen  for valuable discussions.
 
 \section{Algebraically Cocomplete Categories}\label{sec2}
 
 For a number of categories $\ck$ we prove that the full subcategory $\ck_{\leq \lambda}$ on objects of power at most $\lambda$ is algebraically cocomplete. Power is  a cardinal defined below. 
 
 
 \begin{defi}\label{D:power}
 An object $A$ is called \textit{connected} if whenever it is isomorphic to a coproduct, then it is isomorphic to one of the summands.
 (In particular, A is not initial.)
 An object has \textit{power $\lambda$} if it is a coproduct of $\lambda$ connected objects and $\lambda$ is
 the least cardinal possible.
 \end{defi}

 \begin{exa}\label{E:power}
 In $\Set $, connected objects are the singleton sets, and power has its usual meaning. In the category $K$-$\Vec $ of vector spaces over a field $K$ the connected spaces are those of dimension one, and power means dimension. In the category $\Set^S$ of many-sorted sets the connected objects are those with precisely one element (in all sorts together), and the power of $X= (X_s)_{s\in S}$ is simply $|\coprod\limits_{s\in S} X_s|$.
 \end{exa}
 
 \begin{defi}\label{D:width}
A category $\ck$ is said to have \textit{width} $w (\ck)$ if it has  coproducts, every object is a coproduct of connected objects, and $w(\ck)$ is the smallest cardinal $\beta$ such that
 \begin{enumerate}
\item[(a)] $\ck$ has at most  $\beta$ connected objects up to isomorphism, and
\item[(b)] for all cardinals  $\alpha \geq  \beta$ there exist at most $\alpha$ morphisms from a connected object to an object of power $\alpha$.
\end{enumerate}
\end{defi}
 
  \begin{exa}\label{E:width}
  \hfill
  
  (1) $\Set$ has width $1$. More generally, $\Set^S$ has width $|S|$. Indeed, in Example~\ref{E:power} we have seen that the number of connected objects up to isomorphism is $|S|$, and (b) clearly holds.
  
  (2) $K$-$\Vec$ has width $|K| +\aleph_0$. Indeed, the  only connected object, up to isomorphism, is $K$. For a space $X$ of dimension $\alpha$ the number of  morphisms from $K$ to $X$ is $|X|$. If $K$ is infinite, then $\alpha \geq |K|$ implies $|X|= \alpha$ (and $|K|=|K|+\aleph_0$). For $K$ finite, the least cardinal $\lambda$ such that $|X| \leq \alpha$ holds for every $\alpha$-dimensional spaces $X$ with $\alpha \geq \lambda$ is $ \aleph_0$ ($= |K|+ \aleph_0$).
  
  (3) $G$-$\Set$, the category of sets with an action of the group $G$, has width at most $2^{|G|}$ for infinite groups, and at most $\aleph_0$ for finite ones, see our next lemma.  Recall that objects are pairs $(X, \cdot)$ where $X$ is a set and $\cdot$ is a function from $G \times X$ to $X$ (notation: $gx$ for $g\in G$ and $x\in X$) such that 
\begin{align*}
h(gx) &= (hg)x && \mbox{for\quad  $h,g\in G$ and  $x\in X$}\,,\\
\intertext{and}
ex&=x  && \mbox{for\quad  $x\in X$ ($e$ neutral in $G$)}\,.
\end{align*}
Morphisms are the \textit{equivariant} functions: those preserving the unary operation $g.-$ for every $g \in G$.
   
(4) The category $\operatorname{\bf Nom}$ of nominal sets has width  $\aleph_0$, see \ref{C:nom} below. Recall that for a given countably infinite  set  $\ba$, the group $S_f (\ba)$ of finite permutations consists of all composites of  transpositions. A \textit{nominal set} is a set  $X$ together with an action of the group $S_f(\ba)$ on it (notation: $\pi x$ for $\pi \in S_f(\ba)$ and $x \in X$) such that every element $x\in X$ has a  finite support. This means a finite subset $A\subseteq \ba$ such that for every finite permutation we have:
$$
\pi(a) =a \quad \mbox{for all } \ a\in A \ \mbox{implies}\  \pi x =x.\
$$
Morphisms are the equivariant  functions.

\end{exa}

  \begin{lem}\label{L:width}
  For every group $G$ the category $G$-$\Set$ of sets with an action of $G$ has width
  $$
  w(G\mbox{-}\Set) \leq 2^{|G|} + \aleph_0\,.
  $$
  \end{lem}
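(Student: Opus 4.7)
The plan is to identify the connected objects of $G\mbox{-}\Set$ with the \emph{transitive} $G$-sets, i.e., the orbit spaces $G/H$ for subgroups $H \leq G$; two such are isomorphic iff $H$ and $K$ are conjugate in $G$. First I would record that every $G$-set decomposes uniquely as the coproduct of its $G$-orbits, so an object is connected iff it consists of a single orbit, and the standard orbit-stabilizer correspondence yields the classification by conjugacy classes of subgroups. Since $G$ has at most $2^{|G|}$ subgroups, clause (a) of Definition~\ref{D:width} is satisfied with bound $2^{|G|}$; when $G$ is finite the number of subgroups is finite, hence at most $\aleph_0$, which is why the summand $\aleph_0$ appears in the statement.

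For clause (b), let $G/H$ be connected and let $X$ be a $G$-set of power $\alpha$ with $\alpha \geq 2^{|G|} + \aleph_0$. A $G$-equivariant map $G/H \to X$ is determined by the image of the coset $H$, and this image can be any element of $X$ whose stabilizer contains $H$; hence the number of such morphisms is at most $|X|$. By definition of power, $X$ is a coproduct of $\alpha$ orbits, each of cardinality at most $|G|$, so $|X| \leq \alpha \cdot |G|$. Since $\alpha \geq |G|$ and $\alpha$ is infinite, cardinal arithmetic gives $\alpha \cdot |G| = \alpha$, which establishes (b).

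The only mildly delicate point is the book-keeping needed to justify the summand $\aleph_0$ uniformly: for finite $G$ both the number of conjugacy classes of subgroups and the orbit sizes are finite, and one still needs $\alpha \geq \aleph_0$ so that $\alpha \cdot |G| = \alpha$ holds. There is no substantive obstacle; the argument is a routine combination of the orbit decomposition, the description of equivariant maps out of $G/H$, and elementary cardinal arithmetic.
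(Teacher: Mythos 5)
Your proof is correct and follows essentially the same route as the paper: decompose every $G$-set into orbits, classify the connected objects by a parametrization over substructures of $G$, and bound the hom-sets by noting that an equivariant map out of a one-orbit $G$-set is determined by the image of a single basepoint. The only difference is cosmetic: you parametrize transitive $G$-sets by subgroups $H\leq G$ via orbit--stabilizer, while the paper uses equivariant equivalence relations on $G$ (bounded by $|G|^{|G|}=2^{|G|}$); these are in canonical bijection, so the counts agree.
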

  
  \begin{proof}\hfill (1) We first observe
an important example of a $G$-set given by any equivalence relation $\sim$ on $G$ which is \textit{equivariant}, i.e., fulfils
$$
g\sim g' \Rightarrow hg \sim hg' \quad \mbox{for all\quad $g, g', h\in G$}\,.
$$
Then the quotient set $G/\sim$ is a $G$-set (of equivalence classes $[g]$)  w.r.t. the action $g[h] = [gh]$.  This $G$-set  is clearly connected.

\vskip 1mm
(2) Let $(X, \cdot)$ be a $G$-set. For every element $x\in X$ we obtain a subobject of $(X, \cdot)$ on the set
$$
Gx =\{ gx; g\in G\} \qquad \mbox{(the orbit of\ \
$x$)}\,.
$$
The equivalence on $G$ given by
$$
g\sim g' \quad \mbox{iff} \quad gx=g'x
$$
is equivariant, and the $G$-sets $Gx$ and $G/\!\sim$  are isomorphic. Moreover, two orbits are disjoint or equal: given $gx =hy$, then $x=(g^{-1}h)y$, thus, $Gx = Gy$.

\vskip 1mm
(3)
Every object $(X, \cdot)$ is a coproduct of  at most $|X|$ connected objects.
Indeed, let $X_0$ be 
  a choice class of the equivalence $\equiv$ given by
  $$
  x \equiv y \Leftrightarrow  Gx=Gy,
  $$ 
 then $(X, \cdot)$ is a coproduct of the orbits of $x$ for $x\in X_0$.

(4)
The number of connected objects, up to isomorphism, is at most $2^{|G|} + \aleph_0$. Indeed, it follows from the above that the connected objects are represented by precisely all $G/\!\sim$ where $\sim$ is an equivariant equivalence relation. If $|G| =\beta$ then we have at most  $\beta^\beta$ equivalence relations. For $\beta$ infinite, this is equal to $2^{|G|}$, for $\beta$ finite, this is smaller  than 
$2^{|G|} + \aleph_0$.

\vskip 1mm
(5)
The number of morphisms from $G/\!\sim$ to  an object $(X, \cdot)$ is at most $|X|$. Let $\alpha \geq 2^{|G|} + \aleph_0$. If the power of  $(X, \cdot)$ is $\alpha$, than the cardinality of $X$ is at most $\alpha$ since the cardinality of $X_0$ in (3) is at most $\alpha$. Consequently, there exist at most $\alpha$ morphisms $p: G/\!\sim \to (X,.)$ .
Indeed, every morphism $p$ is determined  by the value $x_0 =p([e])$ since $p([g]) = p(g[e]) = g \cdot x_0$  holds for all $[g]\in G/\!\sim$.
\end{proof}

\begin{rem}[Due to George Janelidze, a personal communication] The above does not generalize from groups to monoids.
For example, if $M$ is a commutative monoid, then the category of sets with the action of $M$ does not have a width unless $M$ is a group.
Indeed, suppose $M$ is not a group, then there exist arbitrarily large connected $M$-sets: consider a set X with an element x
and define the action $a: M \times X \to X$ by $a(m,-)=id$ if $m$ is invertible, else the constant map of value $x$.
\end{rem}

\begin{cor}\label{C:nom}
The category $\operatorname{\bf Nom}$ of nominal sets has width ${\aleph_0}$. 
\end{cor}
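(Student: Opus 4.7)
The plan is to refine the argument of Lemma~\ref{L:width} in the special case $G = S_f(\ba)$, where $|G| = \aleph_0$ and so the general bound $2^{|G|} + \aleph_0$ only gives $w \leq 2^{\aleph_0}$. The support axiom will allow a much sharper count of connected objects.

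First, I would carry over from the proof of the lemma that every nominal set is a coproduct of its $S_f(\ba)$-orbits, so connected nominal sets are exactly the single-orbit ones, represented as $S_f(\ba)/H$ for $H$ the stabilizer of some element $x$. The finite-support condition forces $H \supseteq \operatorname{Fix}(A)$ for some finite $A \subseteq \ba$. Taking $A = \operatorname{supp}(x)$ to be the \emph{minimal} support, and invoking the standard facts that (i) intersections of supports are supports (so that minimal supports exist) and (ii) $\operatorname{supp}(\pi x) = \pi(\operatorname{supp}(x))$, one obtains $\operatorname{Fix}(A) \subseteq H \subseteq N(A)$, where $N(A)$ denotes the setwise stabilizer of $A$.

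Next comes the classification. Two orbits $S_f(\ba)/H$ and $S_f(\ba)/H'$ are isomorphic as nominal sets iff $H, H'$ are conjugate in $S_f(\ba)$. By such conjugation I may normalize $A$ to depend only on $n = |A|$, leaving only conjugation by $N(A)$ as residual symmetry. Under the quotient $N(A) \twoheadrightarrow N(A)/\operatorname{Fix}(A) \cong S_n$, the subgroups of $N(A)$ containing $\operatorname{Fix}(A)$ correspond bijectively to subgroups of $S_n$, and isomorphism classes of connected nominal sets with minimal support of size $n$ correspond to conjugacy classes of subgroups of $S_n$ --- a finite collection. Summing over $n \in \N$ yields countably many isomorphism classes, establishing condition (a) of Definition~\ref{D:width} with cardinality $\aleph_0$; the bound $\aleph_0$ is actually attained since the tuple-objects $\ba^{(n)}$, $n \geq 0$, are already pairwise non-isomorphic connected nominal sets.

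For condition (b), each connected object is a quotient of the countable group $S_f(\ba)$, hence itself countable, so any nominal set of power $\alpha \geq \aleph_0$ has underlying cardinality at most $\alpha \cdot \aleph_0 = \alpha$. A morphism from a connected $C$ to $X$ is determined by the image of any chosen representative, giving at most $|X| \leq \alpha$ such morphisms. The main obstacle is the normalization step: verifying that $\operatorname{Stab}(x)$ preserves the minimal support $A$ setwise, which relies on the non-trivial support-intersection lemma at the foundation of nominal set theory.
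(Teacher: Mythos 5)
Your proof is correct and follows essentially the same route as the paper: decompose a nominal set into orbits as in Lemma~\ref{L:width}, count connected objects up to isomorphism, and bound hom-sets via the countability of each orbit. The only difference is that where the paper cites Lemma~A1 of \cite{AMSW} for the fact that there are only $\aleph_0$ single-orbit nominal sets up to isomorphism, you supply its proof (classification by conjugacy classes of subgroups $H$ with $\operatorname{Fix}(A)\subseteq H\subseteq N(A)$, reduced to conjugacy classes of subgroups of $S_n$), which is precisely the content of that cited lemma.
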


The proof is completely analogous to that of 2.5: in (2) each orbit $S_f(\ba)\big/\!\!\!\sim\ \simeq S_f(\ba)x$ is a nominal set. And the   number of all such orbits up to isomorphism is $\aleph_0$, see Lemma~A1 in \cite{AMSW}. In (5) we have $|X| \leq\alpha \cdot\aleph_0=\alpha$ for all $\alpha\geq \aleph_0$.

\vskip 3mm
The following lemma is based on ideas of V.~Trnkov{\'{a}}  \cite{T}:

\begin{lem} \label{L:tr}
A commutative square
$$
\xymatrix{  
& A \ar [dl]_{a_1} \ar[dr]^{a_2} &\\
B_1 \ar [dr]_{b_1} && B_2 \ar[dl]^{b_2}\\
& B &
}
$$
in  $\ca$ is an absolute pullback, i.e., a pullback preserved by all  functors with  domain $\ca$, provided that 

{\rm (1)} $b_1$ and $b_2$ are split monomorphisms, and

{\rm (2)} there exist morphisms $\bar b_1$ and $\bar a_2$:

$$
\xymatrix{  
& A \ar [dl]_{a_1} \ar[dr]^{a_2} &\\
B_1 \ar [dr]_{b_1} && B_2 \ar@<-2ex>[dl]^{b_2}
\ar@<1ex>[ul]^{\bar a_2}\\
& B \ar @<-1ex>[ul]_{\bar b_1} &
}
$$
satisfying
\begin{equation}\label{eq:tr}
\bar b_1 b_1=\id\,, \quad \bar a_2 a_2 =\id,\quad  \mbox{and}\quad  a_1 \bar a_2 =\bar b_1 b_2
\end{equation}
\end{lem}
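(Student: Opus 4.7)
The strategy is the standard one for absoluteness results: observe that every hypothesis in (1) and (2) is expressible purely as an equation between composites (including identities), and hence is preserved by every functor out of $\ca$. Thus it suffices to verify the pullback property for a square satisfying (1) and (2) in an arbitrary category, because the same verification will then apply to the $F$-image of the diagram, for any functor $F$ with domain $\ca$. In particular, $F$ preserves the split monomorphisms $b_1, b_2$ (they remain split mono via their splittings, and split monos are monic in any category).

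With that reduction in hand, I would verify the pullback property directly. Given a competing cone $f\colon Z \to B_1$ and $g\colon Z\to B_2$ with $b_1 f = b_2 g$, define the mediating morphism by
\[
h \;=\; \bar a_2 \, g \colon Z \to A.
\]
For the first triangle, I compute
\[
a_1 h \;=\; a_1 \bar a_2 \, g \;=\; \bar b_1 b_2 \, g \;=\; \bar b_1 b_1 \, f \;=\; f,
\]
using the equation $a_1\bar a_2 = \bar b_1 b_2$ from \eqref{eq:tr}, the commutativity of the cone, and $\bar b_1 b_1 = \id$. For the second triangle, I cannot directly contract $a_2\bar a_2$, but I can precompose with $b_2$:
\[
b_2 a_2 h \;=\; b_1 a_1 h \;=\; b_1 f \;=\; b_2 g,
\]
and then cancel $b_2$ (which is monic, being split). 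Uniqueness is immediate: any $h'$ with $a_2 h' = g$ satisfies $h' = \bar a_2 a_2 h' = \bar a_2 g = h$.

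The only subtlety I anticipate is the one already flagged, namely ensuring that the equational character of the hypotheses is made explicit, so that absoluteness is automatic rather than requiring a separate argument for each functor. All three displayed equations in \eqref{eq:tr}, together with the existence of the splittings of $b_1$ and $b_2$, are of this form, so no further work is needed beyond the four-line verification above.
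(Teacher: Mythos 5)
Your proof is correct and follows essentially the same route as the paper's: the mediating morphism is $\bar a_2\,g$, uniqueness comes from $a_2$ being split monic, the second triangle is obtained by cancelling the (split) monomorphism $b_2$, and absoluteness follows because all hypotheses are equational. Your verification of the first triangle is in fact marginally more direct, since you compute $a_1\bar a_2\,g = \bar b_1 b_2\,g = \bar b_1 b_1\,f = f$ outright rather than precomposing with $b_1$ and cancelling it as the paper does.
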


\begin{proof}
The first square above is a pullback since given  a commutative square
$$
b_1c_1 = b_2 c_2 \quad \mbox{for}\quad c_i \colon C \to B_i
$$
there exists a unique $c$ with $c_i = a_i \cdot c$ ($i=1,2$). Uniqueness is clear since $a_2$ is split monic. Put $c= \bar a_2 \cdot c_2$. Then $c_1= a_1 c$ follows from $b_1$ being monic: 
\begin{align*}
b_1c_1&= b_1 \bar b_1 b_1 c_1 & \bar b_1 b_1 &=\id\\
&= b_1 \bar b_1 b_2 c_2 & b_1 c_1&= b_2 c_2\\
&= b_1  a_1\bar a_2c_2 & \bar b_1 b_2 &= a_1 \bar a_2\\
&= b_1 a_1 c & c&= \bar a_2 c_2
\end{align*}
And $c_2 =a_2 c$ follows from $b_2$ being monic:
\begin{align*}
b_2 c_2& = b_1 c_1 &&\\
&= b_1 a_1 c & c_1 &= a_1 c\\
&= b_2 a_2 c & b_1 a_1 &= b_2 a_2
\end{align*}

For every functor $F$ with domain $\ca$ the image of the given square satisfies the analogous conditions: $Fb_1$ and $Fb_2$ are split monomorphisms and $F\bar b_1$, $F\bar a_2$ verify \eqref{eq:tr}. Thus, that image is  a pullback, too.
\end{proof}

\begin{corC}[\cite{T}]\label{C:tr}
Every set functor preserves nonempty finite intersections.
\end{corC}

Indeed, if $A$ above is nonempty, choose an element $t\in A$ and define $\bar b_1$ and $\bar a_2$ by
\begin{align*}
\bar b_1 (x)&= \begin{cases}
 y & \mbox{if\ \ $b_1 (y) =x$}\\ 
a_1(t) & \mbox{if\ \  $x\notin b_1 [B_1]$}
 \end{cases}
 \\
 \intertext{and}
 \bar a_2(x) &= 
\begin{cases} y &\mbox{if\ \ $a_2(y) =x$}\\ 
t&\mbox{if\ \  $x\notin a_2 [A]$}
 \end{cases}
 \end{align*}
It is easy to see that \eqref{eq:tr} holds.

\begin{rem}\label{R:disj} 
%
%

(a)
	 We recall that 
the \textit{cofinality} of an infinite cardinal $\lambda$ is the smallest cardinal $\mu$ such that $\lambda$ is a join of a $\mu$-chain of smaller cardinals. An infinite cardinal is called \textit{regular} if it equals its cofinality. The first non-regular cardinal is $\aleph_\omega$.

(b)
	Recall that for a set $X$ of infinite  cardinality $\lambda$ a collection of subsets is called \textit{almost disjoint} if the intersection of arbitrary two distinct members  has cardinality smaller than $\lambda$.
	
	Tarski \cite{Ta} proved that for every set $X$ of  infinite cardinality $\lambda$ (not necessarily regular) there exists an almost disjoint collection $Y_i \subseteq X$ ($i\in I$) with 
	$$
	|I| >\lambda.
	$$
Moreover, we can assume $|Y_i| = \lambda$ for all $i$: see \cite{Baum}, Thm. 2.8.


  (c) 
  Given an element $t\in X$ there exists an almost disjoint collection $Y_i$ as above with $t\in Y_i$ for all $i\in I$. Indeed, take any almost disjoint collection $(Y_i)_{i\in I}$ and use  $Y_i \cup \{t\}$ instead (for all $i  \in I$).

  \end{rem}

  \begin{defi}\label{D:lambda} 
  Let $\ck$ be a category  of width $w (\ck)$.
    For every infinite cardinal $\lambda > w(\ck)$ we denote by
  $$
  \ck_{\leq \lambda}
  $$
  the full subcategory of $\ck$ on objects of power  at most $\lambda$.
    \end{defi}

  \begin{prop}\label{P:lambda} 
  Let $F$ be an endofunctor of $\ck_{\leq\lambda}$, $\lambda$ regular.
Given an object $X=\coprod\limits_{i\in I} X_i$, with all $X_i$ connected, every morphism       
  $b\colon B \to FX$  with $B$
 of power less than $\lambda$
factorizes through $F c$ for some subcoproduct
$c\colon C \to X$ where $C= \coprod\limits_{j\in J} X_j$
 and $|J|<\lambda$:
  $$
\xymatrix{  
& FC\ar[d]^{Fc}\\
B\ar@{-->}[ur] \ar [r]_b & FX
}
$$
\end{prop}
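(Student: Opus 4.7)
I would argue by contradiction, assuming $b$ does not factor through $Fc_J$ for any $J \subseteq I$ with $|J| < \lambda$, and derive a cardinality contradiction from the width hypothesis on $\ck$.

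The first key step is to show that the collection
$$
\mathcal{F} = \{J \subseteq I : |J| < \lambda \ \text{ and }\ b \text{ factors through } Fc_J\}
$$
is closed under binary intersection. For $J_1, J_2 \subseteq I$ with $J = J_1 \cap J_2$, the square of coproduct injections
$$
\xymatrix{
C_J \ar[r] \ar[d] & C_{J_1} \ar[d] \\
C_{J_2} \ar[r] & X
}
$$
satisfies the hypotheses of Lemma~\ref{L:tr}: the retractions $\bar b_1$ and $\bar a_2$ are produced in the spirit of Corollary~\ref{C:tr} by fixing a morphism from a connected object into $B$ and using it to send each summand $X_i$ with $i \notin J$ back into $C_J$. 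Hence $F$ preserves this absolute pullback, so factorizations through $Fc_{J_1}$ and $Fc_{J_2}$ glue to a factorization through $Fc_J$, making $\mathcal{F}$ closed under finite intersections.

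The second step uses Tarski's almost disjoint family construction (Remark~\ref{R:disj}). If $\mathcal{F}$ were empty, I would select an almost disjoint family $(J_\xi)_{\xi \in \Xi}$ of subsets of $I$ of cardinality $< \lambda$ with $|\Xi| > \lambda$, available because $|I| \leq \lambda$ and Tarski's bound strictly exceeds $|I|$. For each $\xi$, non-factorizability through $Fc_{J_\xi}$, combined with the $F$-preserved pullback from Step~1, would supply an obstruction in $FX$ that distinguishes $J_\xi$ from every other $J_{\xi'}$, yielding more than $\lambda$ distinct elements in $\hom(B, FX)$. Since $FX$ has power at most $\lambda$, condition (b) of Definition~\ref{D:width} bounds $|\hom(B, FX)|$ by $\lambda$, producing the desired contradiction.

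The hardest part, I expect, is Step~2: arranging that distinct $J_\xi$ genuinely yield distinguishable morphisms. This will likely require first reducing to the case where $B$ is connected by decomposing $B = \coprod_{k \in K} B_k$ with $|K| < \lambda$, combining the factorizations via $J = \bigcup_k J_k$ (tacitly relying on regularity of $\lambda$ to keep $|J| < \lambda$), and then applying Definition~\ref{D:width}(b) directly to the connected component, together with a careful, functorial extraction of the distinguishing witnesses from the absolute pullback of Step~1.
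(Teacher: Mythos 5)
Your proposal does not go through as written; the central mechanism of your Step~2 is missing, and it is precisely the part of the argument that the paper has to work hardest to supply. You have only one morphism $b\colon B\to FX$ in hand, and ``non-factorizability through $Fc_{J_\xi}$'' is a negative condition --- it does not by itself attach a morphism in $\hom(B,FX)$ to $J_\xi$, so there is no visible way to manufacture more than $\lambda$ distinct elements of $\hom(B,FX)$ from an almost disjoint family and contradict Definition~\ref{D:width}(b). The paper produces the needed large family of morphisms positively: since there are fewer than $\lambda$ connected objects up to isomorphism, some connected object $R$ occurs $\lambda$ times, so $X\cong \coprod_\lambda R + X_0$; one then takes a Tarski family $M_k\subseteq\lambda$ ($k\in K$, $|K|>\lambda$) of sets of \emph{full} cardinality $\lambda$ (your family of sets of cardinality $<\lambda$ misquotes Remark~\ref{R:disj} and, more importantly, destroys the next step), so that each subcoproduct $Y_{M_k}$ is \emph{isomorphic to} $X$. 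Composing these isomorphisms with the coproduct injections yields $|K|>\lambda$ endomorphisms $z_k$ of $X$, and the pigeonhole bound $|\ck(B,FX)|\le\lambda$ forces $Fz_k\cdot b=Fz_l\cdot b$ for some $k\ne l$. Only then does the absolute-pullback lemma enter: the pullback of $z_k$ and $z_l$ is $Y_{M_k\cap M_l}$, which has fewer than $\lambda$ summands by almost disjointness, and preservation of this pullback by $F$ turns the coincidence $Fz_k\cdot b=Fz_l\cdot b$ into the desired factorization.

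There is also a gap in your Step~1. The retractions needed for Lemma~\ref{L:tr} require, for each $i\notin J$, a morphism from the connected summand $X_i$ back into $C_J$; in a general category of bounded width such morphisms need not exist (in $\Set^S$ there are no morphisms between connected objects of different sorts), and when $J_1\cap J_2=\emptyset$ they cannot exist at all (compare the nonemptiness hypothesis in Corollary~\ref{C:tr}). The paper avoids this by only ever forming subcoproducts $Y_M$ that all contain $X_0$ and a fixed copy $t$ of the repeated summand $R$, so the retractions merely collapse superfluous copies of $R$ onto the $t$-th one. Your reduction to connected $B$, and the tacit use of regularity of $\lambda$ there, do match the paper, but the heart of the argument is absent.
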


\begin{proof}\hfill

 (1)
It is sufficient to prove this in case $X$ has power precisely $\lambda$ (otherwise put $c=\id_X$). And we can also  assume that $B$ is connected. In  the general case we have, by Definition 2.1, a coproduct of connected objects $B=\coprod\limits_{k\in K} B_k$ with $|K| <\lambda$, and we find for each $k$ a coproduct injection  $c_k \colon C_k \to X$ corresponding to the $k$-th component of $b$, then put $C=\coprod\limits_{k\in K} C_k$ (which has power less than $\lambda^2 =\lambda$  since each $C_k$ does  and $|K|<\lambda$) and  put $c=[c_k] \colon C\to X$.

 Since $\lambda>w(K)$, there are less than $\lambda$ connected objects up to isomorphism. Thus, as $\lambda$ is regular, in the coproduct of $\lambda$ connected  objects representing $X$, at least one  component
 must appear $\lambda$ times. Thus $X$ has the form
 $$
 X = \coprod_\lambda R + X_0
 $$
 where both $R$ and $X_0$ have power less than $\lambda \colon X_0$ is the coproduct of all components that appear less than $\lambda$ times as $X_i$ for some $i\in I$ and $R$ is the coproduct of all the other components where isomorphic copies are not repeated.

 (2) By Remark \ref{R:disj} we can choose $t\in \lambda$ and an almost disjoint collection of sets $M_k \subseteq \lambda$, $k\in K$,  with
 $$
 t\in M_k\,, \ \ |M_k|=\lambda \quad \mbox{and} \quad |K|>\lambda\,.
 $$ 
 Denote for every $M\subseteq \lambda$ by $Y_M$ the coproduct
 $$
 Y_M = \coprod_{M} R+X_0
 $$
 and for every $k\in K$ let $b_k \colon Y_{M_k} \to X$ be the coproduct injection.

 Consider the following square of coproduct injections for any pair $k$, $l\in K$:
 $$
\xymatrix{  
& Y_{M_l\cap M_k} \ar[dl]_{a_k} \ar[dr]^{a_l}
&\\
Y_{M_k} \ar[dr]_{b_k} &&  Y_{M_l}\ar[dl]^{b_l}\\
& X & \\
}
$$
This is an absolute pullback. Indeed, it obviously commutes. And $b_k$ and $b_l$ are split monomorphisms: define
$$
\bar b_k \colon X \to Y_{M_k}
$$
as identity on the summand $ X_0$ whereas the $i$-th copy of $R$ is sent to copy $i$ if $i\in M_k$, and to copy $t$ else. Then
$$
\bar b_k b_k =\id\,.
$$
Analogously for $b_l$. Next define
$$
\bar a_l \colon Y_{M_l} \to Y_{M_k\cap M_l}
$$
as identity on the summand $ X_0$ whereas the $i$-th copy of $R$ is sent to copy $i$ if $i\in M_k$, and to copy $t$ else.
Then clearly
$$
\bar a_k a_l =\id \quad \mbox{and} \quad a_k\bar a_l = \bar b_k b_l\,.
$$
Thus, the above square is an absolute pullback by Lemma~\ref{L:tr}.
\vskip 2mm

(3) We are ready to prove that for a connected object $B$ every morphism
$$
b\colon B \to FX
$$
has the required factorization.
For every $k\in K$ since $|M_k|=\lambda$ we have an isomorphism
$$
y_k \colon X \to Y_{M_k}
$$
which composed with $b_k\colon Y_{M_k} \to X$ yields an endomorphism
$$
z_k = b_k\cdot y_k \colon X \to X\,.
$$
 The following morphisms
$$
B \xrightarrow{\ b\ } FX  \xrightarrow{\ Fz_k\ } FX \quad (k\in K)
$$
 are not pairwise distinct because  $|K| > \lambda$,
whereas 
$|\ck (B, FX)| \leq \lambda$. Indeed, the latter follows since
$FX$ has at most $\lambda$ components (since $F$ is an endofunctor of $\ck_{\leq \lambda}$) so that (b) in Definition~\ref{D:width} implies that $\ck (B, FX)$ has cardinality at most $\lambda$. Choose $k\ne l$ in $K$ with
\begin{equation}\label{eq:k-l}
F z_{k} \cdot  b = Fz_l \cdot  b\,.
\end{equation}
Compare the pullbacks $Z$ of $z_k$ and $z_l$ and $Y_{M_k\cap M_l}$ of $b_k$ and $b_l$:
$$
\xymatrix@R=3pc{
& Z \ar@{-->}[d]^{p} \ar[dl]_{p_k} \ar[dr]^{p_l} &\\
X \ar[d]_{y_k} & Y_{M_k\cap M_l}\ar[dl]_{a_k} \ar[dr]^{a_l}& X \ar[d]^{y_l}\\
Y_{M_k} \ar[dr]_{b_k} && Y_{M_l} \ar[dl]^{b_l}\\
& X & 
}
$$
Since $y_k$ and $y_l$ are isomorphisms, the connecting morphism $p$ between the above pullbacks is an isomorphism, too.  We know that $|M_k\cap M_l|<\lambda$ 
since $M_k$, $M_l$ are members 
 of our almost disjoint family, thus the coproduct injection
 $$
 c\colon Y_{M_k\cap M_l} \to X
 $$
 has  less than $\lambda$ summands, as required. And, due to (2),  the pullback of $z_k$ and $z_l$ is absolute. The equality \eqref{eq:k-l} thus implies that $b$ factorizes through $Fp_k$.
 Since clearly $p_k = c\cdot p$, this implies that $b$ factorizes through $Fc$, as required.
  \end{proof}
 
 \begin{rem}\label{R:nova} The assumption that $\lambda$ is a regular cardinal has only been used  in the above proof
 	in Step (1), where we claimed that one component is repeated $\lambda$ times in $X$. If our category has finitely many connected objects up to isomorphism, then the above proposition also holds for non-regular infinite cardinals.
 \end{rem}

 \begin{rem}\label{R:lambda}
 \hfill

%
%
 (a) If an infinite cardinal $\lambda$ has cofinality $n$, then $\lambda^n>\lambda$, see \cite{J}, Corollary 1.6.4.

 (b) Every ordinal $\alpha$ is considered as the set of all smaller ordinals. In particular $\aleph_0$ is the set of all natural numbers and $\aleph_1$ the set of all countable ordinals.

 (c) Recall the
  \begin{center}
 	\textit {Continuum Hypothesis (CH)}
 \end{center}
stating that the cardinal successor of $\aleph_0$ is 
$2^{\aleph_0}$, and the
 \begin{center}
 \textit{General Continuum Hypothesis (GCH)}
 \end{center}
 which states that for every infinite cardinal $\lambda$ the cardinal successor is $2^\lambda$. 

(d) Under GCH every infinite regular cardinal $\lambda$ fulfils
$$
\lambda^n =\lambda \quad \mbox{for all cardinals} \quad 1\leq n<\lambda\,.
$$
 See Theorem 1.6.17 in \cite{J}.
 \end{rem}
 
 \begin{thm} \label{T:lambda}
 Assuming GCH, let $\ck$ be a cocomplete and cowellpowered category of  width $w(\ck)$. Then $\ck_{\leq\lambda}$ is algebraically cocomplete for all uncountable regular cardinals $\lambda > w(\ck)$.
 \end{thm}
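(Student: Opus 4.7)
The plan is to construct the terminal coalgebra of any endofunctor $F\colon\ck_{\leq\lambda}\to\ck_{\leq\lambda}$ via the terminal-coalgebra chain
\[
W_0 = 1,\qquad W_{\alpha+1}=FW_\alpha,\qquad W_\gamma=\lim_{\alpha<\gamma}W_\alpha \ \text{for limit }\gamma,
\]
and to show that it stabilizes at some ordinal $\alpha^\ast<\lambda^+$. The fixed point $W_{\alpha^\ast}\cong FW_{\alpha^\ast}$ is then the terminal coalgebra: Lambek's Lemma supplies the coalgebra structure, and terminality is inherited from the universal property of the limits at each stage.

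I would proceed in three layers. First, the hypotheses (cocompleteness, cowellpoweredness, and the small generating set of connected objects provided by Definition~\ref{D:width}) are used to guarantee that the limits appearing in the chain exist in $\ck$ and that $\ck_{\leq\lambda}$ is closed under $\lambda$-small ones. Indeed, for a $\mu$-indexed limit with $\mu<\lambda$, Definition~\ref{D:width}(b) bounds the morphisms from each connected object into the limit by its cardinality, so its power is at most $\lambda^\mu$, and GCH with $\lambda$ regular uncountable gives $\lambda^\mu=\lambda$. Second, a cardinality count yields that $\ck_{\leq\lambda}$ has at most $\sum_{\alpha\leq\lambda}w(\ck)^\alpha\leq 2^\lambda=\lambda^+$ isomorphism classes under GCH, so the chain of length $\lambda^+$ must contain a repetition $W_\alpha\cong W_\beta$. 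Third, Proposition~\ref{P:lambda} is invoked to upgrade this object-level repetition to a genuine fixed point of $F$.

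The main obstacle is this last step. Proposition~\ref{P:lambda} encodes the right form of $\lambda$-accessibility of $F$: every morphism from a connected object into $FW_\alpha$ factors through $Fc$ for some $\lambda$-small coproduct injection $c$. Combined with the regularity of $\lambda$ and the generation of every object of $\ck_{\leq\lambda}$ by connected ones, this allows the bonding morphisms $W_\beta\to W_\alpha$ to be tested pointwise on small pieces and concluded to be isomorphisms, so that stabilization holds in $\Coalg F$ and not merely among isomorphism classes of underlying objects. Carrying out this patching carefully, and then checking terminality of the fixed point via the standard limit-of-chain argument (any coalgebra $(A,\alpha)$ in $\ck_{\leq\lambda}$ induces a cone over the chain, hence a unique morphism into $W_{\alpha^\ast}$), will complete the proof.
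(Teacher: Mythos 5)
Your plan rests on the terminal-coalgebra chain, and this is where it breaks down in two places. First, the chain needs limits, and the hypotheses only give you a \emph{cocomplete}, cowellpowered category; even granting that $\ck$ is complete (e.g.\ via the adjoint functor theorem, using that the connected objects form a generator), the subcategory $\ck_{\leq\lambda}$ is not closed under the limits you need. Your cardinality bound $\lambda^\mu=\lambda$ only works for $\mu<\lambda$; at the limit stage $\gamma=\lambda$ the limit of a $\lambda^{\op}$-chain can have power up to $\lambda^{\lambda}=2^{\lambda}>\lambda$, so the chain cannot even be continued to $\lambda^+$ inside $\ck_{\leq\lambda}$. (Already in $\Set$ the limit of an $\omega^{\op}$-chain of countable sets can have cardinality $2^{\aleph_0}$.) Second, and independently, a repetition $W_\alpha\cong W_\beta$ of isomorphism classes does not give convergence of the chain: convergence means the \emph{connecting morphism} $W_{\alpha+1}\to W_\alpha$ is invertible, and no amount of counting isomorphism classes produces that. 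Proposition~\ref{P:lambda} cannot be ``invoked'' to close this gap --- it is a factorization statement about morphisms from small objects into $FX$, and says nothing about the bonding maps of the chain. (Also, with $\lambda^+$ isomorphism classes and a chain of length $\lambda^+$, the pigeonhole step itself fails.)

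The paper avoids limits entirely and argues by colimits, which is what the standing hypotheses are designed for. It forms the coproduct $A$ of a representative set of all coalgebras on objects of power \emph{less than} $\lambda$ (a counting argument using the width and GCH shows there are at most $\lambda$ of these), then takes the wide pushout of all epimorphic quotients of $(A,\alpha)$ in $\Coalg F$ to obtain a coalgebra $(T,\tau)$ with no proper quotients; this immediately gives a unique homomorphism from every small coalgebra into $(T,\tau)$ (uniqueness via a coequalizer, which must be invertible). The role of Proposition~\ref{P:lambda} is then quite different from what you assign it: it is used to show that every coalgebra in $\ck_{\leq\lambda}$ is a $\lambda$-filtered colimit of subcoalgebras carried by subcoproducts of power less than $\lambda$ --- one closes a small set of components under the coalgebra structure in $\omega$ steps, using regularity of $\lambda$ to keep the union small --- whence the unique homomorphism into $(T,\tau)$ extends to all of $\ck_{\leq\lambda}$. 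If you want to salvage your approach you would have to either bound the limits at stages of cofinality $\lambda$ or truncate the chain before $\lambda$, and in either case you would still need a genuinely new argument for convergence; the colimit route sidesteps both problems.
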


 \begin{proof}
 Let $F$ be an endofunctor of $\ck_{\leq \lambda}$. Form a collection $a_i \colon A_i \to FA_i$ ($i\in I$) representing all coalgebras of $F$ on objects of power  less than $\lambda$  (up to isomorphism of  coalgebras).
 We have
 $$
 |I|\leq \lambda\,.
 $$
Indeed, for every cardinal $n<\lambda$ let $I_n\subseteq I$ be the  subset of all $i$ with $A_i$ having $n$ components. Given $i\in I_n$, for every component $b\colon B \to A_i$ of $A_i$ we know, since  $\lambda > |\ck|$, that there are at most $\lambda$ morphisms from $B$ to $FA_i$ (recalling that $FA_i$ has at most $\lambda$ components).  Thus there are at most $n\cdot \lambda =\lambda$ morphisms from $A_i$ to $FA_i$. And  the number of objects $A_i$ with $n$ components is at most $w (\ck)^n< \lambda^n =\lambda$ (see Remark \ref{R:lambda}). Thus, there are at most $\lambda$ indexes in $I_n$. Since $I=\bigcup\limits_{n<\lambda} I_n$, this proves $|I| \leq \lambda^2 =\lambda$.

 Consequently $A=\coprod\limits_{i\in I} A_i$ is an object of $\ck_{\leq \lambda}$, and we have the coalgebra structure $\alpha\colon A\to FA$ of a coproduct of $(A_i, \alpha_i)$ in  $\Coalg F$. 
Let $e\colon (A, \alpha) \to (T, \tau)$ be  the wide pushout  of all homomorphisms in $\Coalg F$ with domains $(A, \alpha)$ carried by epimorphisms of $\ck$. Since $\ck$ is cocomplete and cowellpowered, and  since the forgetful functor from $\Coalg F$ to $\ck$ creates colimits, this means that we form the corresponding pushout in $\ck$ and get a unique coalgebra structure
 $$
 \tau \colon T \to FT
 $$
 making $e$ a homomorphism:
 $$
\xymatrix{  
\coprod A_i \ar[r]^{\alpha} \ar[d]_{e}  & F(\coprod A_i) \ar[d]^{Fe}\\
T \ar[r]_{\tau} & FT
}
$$
  We are going to prove that $(T, \tau)$ is a terminal coalgebra.

 (1) Firstly, for every coalgebra $\beta \colon B \to FB$ with $B$ having power  less than $\lambda$ there exists a unique homomorphism into $(T, \tau)$. Indeed, the existence is clear: compose  the isomorphism that exists from $(B, \beta)$ to some $(A_i, \alpha_i)$, the $i$-th coproduct injection to $(A, \alpha)$ and the above  homomorphism $e$. To prove uniqueness, observe that by definition of $(T, \tau)$, this coalgebra has no nontrivial quotient: every homomorphism with domain $(T, \tau)$ whose  underlying morphism is epic in $\ck$ is invertible. Given homomorphisms $u$, $v \colon (B, \beta) \to (T, \tau)$
 $$
\xymatrix{  
B\ar[r]^\beta
\ar@<.5ex>[d]^v \ar@<-.5ex>[d]_u
 & FB \ar@<.5ex>[d]_{Fu\ } \ar@<-.5ex>[d]^{\ Fv}\\
 T\ar[r]^\tau \ar[d]_{q} & FT\ar[d]^{Fq}\\
 Q \ar@{-->}[r] & FQ
}
$$
form their coequalizer $q\colon T\to Q$ in $\ck$. Then $Q$ carries the structure of a coalgebra making $q$ a homomorphism. Thus, $q$ is invertible, proving $u=v$.

(2) Next, consider an arbitrary coalgebra $\beta \colon B \to FB$. Express $B =\coprod\limits_{i\in I} B_i$ where $B_i$ are connected and  assume  $|I|=\lambda$ (the case $|I|<\lambda$ has just been handled). For every set $J\subseteq I$ we
denote by $u_J \colon \coprod\limits_{i\in J} B_i\to B$ the subcoproduct. In case
 $|J|<\lambda$ we are going to prove that there exists a set $J\subseteq J' \subseteq I$ with $|J'| <\lambda$  such that the summand 
$$ 
u_{J'} \colon B_{J'} =\coprod_{i\in J'} B_i \to B
$$
carries a subcoalgebra. That is, there exists $\beta_{J'} \colon B_{J'} \to FB_{J'}$ for which  $u_{J'}$ is a homomorphism. Indeed, we put
$$
J' =\bigcup_{n<\omega} J_n
$$
for the following $\omega$-chain of sets $J_n \subseteq I$ with $|J_n| <\lambda$. First
$$ 
J_0=J\,,
$$
and given $J_n$, apply Proposition~\ref{P:lambda} to $\beta . u_{J_n} \colon B_{J_n} \to FB$. We conclude that this morphism factorizes through $Fb_{J_{n+1}}$ for some subset $J_{n+1}\subseteq I$ of power less that $\lambda$:
$$ 
\xymatrix{
B_{J_n} \ar @{-->}[r]^{\beta^n} 
\ar [d]_{u_{J_n}} & FB_{J_{n+1}} \ar [d]^{Fu_{J_{n+1}}}\\ 
B \ar[r]_{\beta} & FB
}
$$

Thus, for the union $J'=\bigcup J_n$ we get  $|J'|<\lambda$ because $\lambda$ is  uncountable and regular, therefore $|\underset{n<\omega}{\coprod}J_n| <\lambda$.
And $u_{J'}$ carries the following subcoalgebra $\beta_{J'}\colon \coprod\limits_{j\in J'}B_j \to F \big(\coprod\limits_{j\in J'} B_j\big)$ of $(B, \beta)$:
Given $j\in J'$ let $n$ be the least number with $j\in J_n$. Denote by 
$w\colon B_j \to \coprod\limits_{i\in J_n} B_i$ and 
 $v\colon \underset{i \in J_{n+1}}{\coprod} B_i \to \underset{j \in J'}{\coprod} B_j$ the coproduct injections. Then the $j$-th component of $\beta'$ is the following composite
$$
B_j \xrightarrow{w} \coprod_{i\in J_n} B_i \xrightarrow{\beta^n} F\big( \underset{i\in J_{n+1}}{\coprod} B_i\big) \xrightarrow{Fv} F\big(\coprod_{i\in J'} B\big)
$$
 To prove that the  square below
$$
\xymatrix@C=3pc@R=3pc{  
\coprod\limits_{i\in J'} B_i \ar[r]^{\beta_{J'}\ \ } \ar[d]_{u_{J'}} & F\big(\coprod\limits_{i\in J'} B_i\big) \ar [d]^{Fu_{J'}}\\
\coprod\limits_{i\in I} B_i \ar[r]_{\beta\ \ } & F\big(\underset{i\in I}{\coprod} B_i\big)
}
$$ 
 commutes, consider  the components for $j\in J'$ separately. The upper passage yields, since
 $u_{J'} \cdot v =  u_{J_{n+1}} \colon \underset{i\in L_n}{\coprod}  B_i \to \underset{i\in I}{\coprod} B_i$, the result
 $$
 Fu_{J'} \cdot (Fv\cdot \beta^n\cdot w) = Fu_{J_{n+1}} \cdot \beta^n \cdot w = \beta \cdot u_{J_n} \cdot w\,.
 $$
 The lower passage yields the same.
 
 (3) We conclude that every coalgebra $\beta \colon B \to FB$ is a colimit, in $\Coalg F$, of coalgebras on objects of power less than $\lambda$. Indeed,  this is non-trivial only for objects $B\in \ck_{\leq \lambda}$ where 
 $B=\underset{i\in I}{\coprod} B_i$ with $B_i$ indecomposable and $|I| =\lambda$.  Then for every set $J'\subseteq I$ with $|J'|<\lambda$ for which  $B_{J'} $ carries a subcoalgebra $(B_{J'}, \beta_{J'})$ of $(B, \beta)$ we take this as an object of our diagram. And morphisms from 
 $(B_{J'}, \beta_{J'})$ to $(B_{J^{\prime\prime}}, \beta_{J^{\prime\prime}})$ (where again $J^{\prime\prime} \subseteq I$ fulfils $|J^{\prime\prime}| <\lambda)$, are those   coproduct injections $u\colon B_{J'} \to B_{J^{\prime\prime}}$ for $J'\subseteq J^{\prime\prime}$ that are coalgebra homomorphisms. Since every subset $J\subseteq I$ with $|J|<\lambda$ is contained in $J'$ for some object of this diagram, it easily follows that the  cocone of coproduct injections from $(B_{J'}, \beta_{J'})$ to $(B, \beta)$ is a colimit of the above diagram.
 
(4)  Thus, from Part (1) we conclude that there exists a unique homomorphism from $(B, \beta)$ to $(T, \tau)$.
   \end{proof}
 
\begin{exa}\label{E:new} 
The category $\Set_{\leq \lambda}$ of sets of power at most $\lambda$ is, under GCH, algebraically cocomplete for all uncountable regular cardinals $\lambda$.

However, the category $\Set_{\leq \aleph_0}$ of countable sets is not algebraically cocomplete.
 Indeed, the restriction $\cp_f$ 
of the finite power-set functor to it does not have a terminal coalgebra.

Assuming the contrary, let $\tau \colon T\to \cp _f T$ be  a (countable) terminal coalgebra for the restricted functor. For every subset $A\subseteq \N$ denote by $X_A$ the following countable, finitely branching graph:
$$
\xymatrix{&&&&&&\\
0 \ar[r] & 1 \ar[r] & 2\ar[r] & \cdots\hskip-1cm&\hskip-1cm\ar[r]& i\ar[r] \ar[ur] & \cdots 
}$$
\vskip-0.7cm\hskip 11cm $(i\in A)$
\vskip .8cm
\noindent
It is obtained from the infinite path on $\N$ by taking an extra successor for $i\in \N$ iff $i\in A$. Thus $X_A$ is a coalgebra for $\cp_f$. The unique homomorphism $h_A\colon X_A \to (T, \tau)$ takes $0$ to an element $h_A(0)\in T$. The desired contradiction is achieved by proving for all subsets $A$, $A'$ of $\N$ that if $h_A(0) = h_{A'}(0)$,  then  $A=A'$ -- but  then $|T|\geq |2^{\N}|$.

From $h_A(0) = h_{A'}(0)$ we derive $A\subseteq A'$; by symmetry, $A=A'$ follows.
 Since $h_A$ and $h_{A'}$ are coalgebra homomorphisms, the relation $R\subseteq X_A \times X_{A'}$ of all pairs $(x, x')$ with $h_A(x) = h_{A'}(x')$ is a graph bisimulation. By assumption $0R0$, and  for every $i\in A$ we find a leaf of $A$ with distance $i+1$ from $0$. Hence, there is a leaf of $A'$ with  the same distance. This proves $A\subseteq A'$.
\end{exa}     
     
\begin{cor} The category $\Set_{\leq \aleph_1}$ is algebraically cocomplete iff the continuum hypothesis holds.
\end{cor}
     
Indeed, if CH holds, then the proof of Theorem 2.14 applies: we do not need the full strength of GCH in that proof. All we need is the statement at the beginning of the proof that for every $n<\aleph_1$ there are at most $\aleph_1$ coalgebras of power $n$, which follows from CH.

Conversely, if $\aleph_1 < 2^{\aleph_0}$, then the restriction of  $\cp_f$ to  $\Set_{\leq \aleph_1}$ does not have 
a terminal coalgebra, the argument is as in the preceding example. 
     \section{Algebraically Complete Categories} 
     
    In this section we prove a criterion for a category to be algebraically complete. It does not require GCH. But we will have to
    assume a bit more than the above width. Fortunately, all of our concrete examples above satisfy that stricter property.     

\begin{rem}\label{R:const}
\hfill

(a) Let $\ck$ be a category with an initial object $0$ and with colimits of $i$-chains for all limit ordinals $i\leq \lambda$.
 Recall from \cite{A0} the \textit{initial-algebra chain} of an endofunctor $F$: its objects  $F^i 0$ for all ordinals $i\leq \lambda$ and its connecting  morphisms $w_{i,j} \colon F^i 0 \to F^j 0$ for all $i\leq j\leq \lambda$ are defined by transfinite recursion as follows:
\begin{align*}
& F^0 0 =0\,,\\
& F^{i+1} 0 = F(F^i 0), \quad \mbox{and}\\
& F^j 0 = \underset{i<j}{\colim} F^i 0 \quad \mbox{for limit ordinals \ $j$.}
\end{align*}
Analogously:
\begin{align*}
& w_{01} \colon 0 \to F0 \quad \mbox{is unique},\\
& w_{i+1, j+1} = Fw_{i,j}\,,
\quad \mbox{and}\\
& w_{ij} \ (i<j) \quad \mbox{is a colimit cocone for limit ordinals $j$.}
\end{align*}

(b) The initial-algebra chain  \textit{converges} at a limit ordinal $\lambda$ if the connecting map $w_{\lambda, \lambda+1}$ is invertible. In that case we  get the initial-algebra

$$
\mu F = F^\lambda 0
$$
with the algebra structure
$$
\iota = w_{\lambda, \lambda+1}^{-1} \colon F( F^\lambda 0)\to F^\lambda 0\,.
$$
Following \cite{TAKR} such an ordinal exists whenever $F$ has a fixed point, in particular, whenever $F$ has a terminal coalgebra.\

(c) In particular, if  $F$ preserves colimits of $\lambda$-chains, then $\mu F = F^\lambda 0$.

\vskip 1mm
(d) Dually, the \textit{terminal-coalgebra chain} has objects $F^i1$  with $F^0 1=1$, $F^{i+1} 1= F(F^i 1$) and  $F^i 1 =\lim\limits_{i<j} F^i 0$ for limit ordinals $j$. Its connecting morphisms are denoted by $v_{ij}\colon F^i 1 \to F^j 1$\ ($i\geq j$).
For limit ordinals $j$ they are determined by the fact that $F^j 1$ is a limit of the preceding chain. For example 
$v_{\omega+ 1, \omega}$  is determined by the condition
 $$
v_{\omega+ 1, \omega} \cdot  v_{\omega, n+1} = v_{\omega +1, n+1} = Fv_{\omega,n}  
$$
for all $n<\omega$. 
   
   This was explicitly  formulated by Barr \cite{B}.

If the base  category has limits of $i^{\op}$-chains for all limit ordinals $i\leq \lambda$ and
 $F$ preserves limits of $\lambda^{\op}$-chains, then  $\nu F = F^\lambda 1$.
   
\vskip 1mm
 (e)  A set functor $G$ \textit{preserves inclusion}  if given a subset $X \subseteq Y$, then $FX \subseteq FY$ and for the inclusion map $i: X \to Y$ also $Fi$ is the inclusion map. 
 
 For every set functor $F$ there exists a set functor  $G$ preserving finite intersections and having the same initial-algebra chain as $F$ from $\omega$ onwards. Moreover, $F$ and $G$ coincide on all nonempty sets and functions and if $F\emptyset \ne \emptyset$, then $G\emptyset \ne \emptyset$. See \cite{AK}, Remark 3. Moreover $G$ can be chosen so that it preserves inclusion, see \cite{AT}, Theorem.III.4.5.


   \vskip 1mm
   (f) For every set functor $F$ there exists a unique morphism $\bar u\colon F^\omega 0 \to F^\omega 1$ making the squares below
   $$
\xymatrix{
F^n 0 \ar[r]^{w_{n,\omega}} \ar[d]_{F^n !} & F^\omega 0 \ar[d]^{\bar u} \\
F^n 1 & F^\omega 1 \ar[l]^{v_{\omega, n}}
} 
\xymatrix{&\\
\qquad (n\in {\mathbb{N}})&
}
$$
commutative (where $!\colon 0\to 1$ is the unique morphism). See \cite{A}, Lemma 2.4.
\end{rem}

 
 \begin{rem}\label{R:lp} 
We recall here some facts from \cite{AR}. Let $\lambda$ be an infinite regular cardinal.

(a) An object $A$ of a category $\ck$ is called  \textit{$\lambda$-presentable} if its hom-functor $\ck (A, -)$ preserves  $\lambda$-filtered colimits. And $A$ is called \textit{$\lambda$-generated} if $\ck (A,-)$ preserves $\lambda$-filtered colimits of diagrams where all connecting morphisms are monic.

(b) A category $\ck$ is called \textit{locally $\lambda$-presentable} if it is cocomplete and has a strong generator consisting of $\lambda$-presentable objects $G_i$ for $i \in I$. That is, every subobject $m:A \to B$ such that all morphisms $G_i \to B$
factorize through $m$ is invertible.

(c) Every such category has a factorization system (strong epi, mono). An object is $\lambda$-generated iff it is a strong quotient  of a $\lambda$-presentable one.

(d) In the case $\lambda =\aleph_0$ we speak about \textit{locally finitely presentable categories}.
\end{rem}

\begin{defiC}[\cite{AMSW}]\label{D:lp} 
A \textit{strictly  locally $\lambda$-presentable category} is a locally $\lambda$-presentable category in which every morphism $b\colon B\to A$ with  $B$ $\lambda$-presentable has a factorization
$$
b=b'\cdot f\cdot b
$$
for some morphisms $b'\colon B'\to A$ and $f\colon A\to B'$ with $B'$ also $\lambda$-presentable. 
\end{defiC}

\begin{exasC}[\cite{AMSW}]\label{E:lp} 
\hfill

(a) The categories $\Set$, $K$-$\Vec $ and $G$-$\Set$, where $G$  is a finite group, are strictly locally finitely presentable.

(b) $\Nom$ is strictly locally $\aleph_1$-presentable.

(c) $\Set^S$ is strictly locally $\lambda$-presentable iff $\lambda >|S|$.

(d) Given an infinite group $G$, the category $G$-$\Set$ is strictly locally $\lambda$-presentable if $\lambda >2^{|G|}$.
\end{exasC}

\begin{defi}\label{D:slp} 
A category $\ck$ has \textit{strict width} $w(\ck)$ if it has width $w(\ck)$, its coproduct injections are monic,  and every connected object is finitely presentable.
\end{defi}

\begin{exa}\label{E:slp} \hfill

(1) The category $\Set^S$ has strict width $|S|+ \aleph_0$, since connected objects (see Example~\ref{E:power}) are finitely presentable.

(2) $K$-$\Vec$ has strict width $|K| +\aleph_0$: the only connected object $K$ is finitely presentable.

(3) $G$-$\Set$ has strict width at most $2^{|G|} + \aleph_0$. Indeed, it follows from the proof of Lemma~\ref{L:width} that the only connected objects are the quotients of $G$, and they are easily seen to be finitely presentable.

(4) $\Nom$ has strict width $\aleph_1$, the argument is as in (3).
\end{exa}

\begin{lem}\label{L:slp} 
If a category has strict width  $w(\ck)$, then for every regular cardinal $\lambda \geq w(\ck)$  $\lambda$-presentable objects are precisely the objects of power less than $\lambda$.
\end{lem}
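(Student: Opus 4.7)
My plan is to prove the two directions of the biconditional separately. Set $\lambda_0 := w(\ck) + \aleph_0$; since $\lambda$ is a regular (infinite) cardinal with $\lambda \geq w(\ck)$, we have $\lambda_0 \leq \lambda$.

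For the easy direction, suppose $X$ has power less than $\lambda$ and write $X = \coprod_{i \in I} X_i$ with $|I| < \lambda$ and each $X_i$ connected. By the definition of strict width each $X_i$ is $\lambda_0$-presentable, hence $\lambda$-presentable; a coproduct of fewer than $\lambda$ many $\lambda$-presentables is $\lambda$-presentable by the standard argument using regularity of $\lambda$, so $X$ is $\lambda$-presentable.

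For the converse, given $X$ $\lambda$-presentable with any decomposition $X = \coprod_{i \in I} X_i$ into connected summands, my plan is to exhibit $X$ as the $\lambda$-filtered colimit of its sub-coproducts $X_J := \coprod_{j \in J} X_j$ for $J \subseteq I$ with $|J| < \lambda$, with connecting morphisms the coproduct inclusions $X_J \hookrightarrow X_{J'}$ for $J \subseteq J'$. Regularity of $\lambda$ makes this diagram $\lambda$-filtered, and the coproduct injections $u_J \colon X_J \to X$ form a colimit cocone by the universal property of $X$. Then $\lambda$-presentability of $X$ forces $\id_X$ to factor as $u_{J_0} \cdot f$ for some $f \colon X \to X_{J_0}$ with $|J_0| < \lambda$; this exhibits $u_{J_0}$ as a split epimorphism, which together with monicness (strict width) makes it an isomorphism.

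To conclude $I = J_0$, I would take any $i \in I \setminus J_0$ and use that $X_i$, being connected, is itself $\lambda$-presentable, so $\ck(X_i,-)$ also preserves $X = \colim X_J$. The element $u_i \in \ck(X_i, X)$ then admits two representatives in the filtered colimit — namely $(X_{\{i\}}, \id_{X_i})$ and $(X_{J_0}, f \cdot u_i)$ — which must coincide at some common refinement $J_3 \supseteq \{i\} \cup J_0$, yielding
\[
u^{J_3}_i = u^{J_3}_{J_0} \cdot f \cdot u_i \colon X_i \to X_{J_3}.
\]
The main obstacle is extracting a contradiction here: the left side is the $i$-th coproduct injection into $X_{J_3}$, while the right factors through the sub-coproduct indexed by $J_3 \setminus \{i\}$ (since $i \notin J_0$). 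Strict width alone does not immediately forbid such a factorization, so I expect to need the disjointness/extensivity of coproducts — a property enjoyed by every category in \ref{E:slp} — which, combined with the non-initiality of the connected object $X_i$, delivers the contradiction. This forces $I = J_0$ and hence $|I| < \lambda$.
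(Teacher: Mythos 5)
Your argument coincides with the paper's up to and including the point where $u_{J_0}$ is shown to be split epic and monic, hence an isomorphism --- and at that point you are already done. The lemma asserts that $X$ has power less than $\lambda$, and by Definition~\ref{D:power} this only requires exhibiting \emph{some} representation of $X$ as a coproduct of fewer than $\lambda$ connected objects; the isomorphism $X\simeq X_{J_0}=\coprod_{j\in J_0}X_j$ with $|J_0|<\lambda$ provides exactly that. This is precisely where the paper's proof stops. Your final paragraph, attempting to show $I=J_0$ for the originally given decomposition, proves a strictly stronger statement than is needed, and the obstacle you correctly identify there (that strict width alone does not rule out the $i$-th injection factoring through a sub-coproduct missing $i$) is real: without extensivity or disjointness of coproducts, which are not among the hypotheses, that step cannot be carried out in general. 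So simply delete the last paragraph; the rest, including the easy direction via closure of $\lambda$-presentables under $<\lambda$-indexed coproducts, matches the paper's proof.
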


\begin{proof}
If $X$ is $\lambda$-presentable and $X =\coprod\limits_{i\in I} X_i$ with  connected objects $X_i$, then  in case  $\card I< \lambda$ we have nothing to prove.
 And if $\card I \geq \lambda$, form the $\lambda$-filtered diagram of all coproducts $\coprod\limits_{j\in J} X_j$ where $J$ ranges over  subsets of $I$ with $\card J<\lambda$. Its colimit is $X$. Since $\ck (X, -)$ preserves this colimit, there exists a factorization of $\id_X$ through one of the colimit injections $v\colon \coprod\limits_{j\in J} X_j \to  X$. Now  $v$ is monic (by the definition of strict width) and split epic, hence it is an isomorphism. Thus, $X \simeq \coprod\limits_{j\in J} X_j $ has power at most  $\card J<\lambda$.
 
 Conversely, if $X$ has power less than $\lambda$, then it is $\lambda$-presentable because every coproduct of less than $\lambda$ objects which are  $\lambda$-presentable   is $\lambda$-presentable.
 \end{proof}
 
 \begin{rem}\label{R:gener}
   \leavevmode

 (a) In a strictly locally $\lambda$-presentable category every $\lambda$-generated object is $\lambda$-presentable. This was  proved for  $\lambda= \aleph_0$ in \cite{AMSW}, Remark 3.8, the general case  is analogous.
 
 (b) In every locally $\lambda$-presentable category $\ck$ all $\hom$-functors of $\lambda$-presentable objects collectively reflect  $\lambda$-filtered colimits. That is, given a $\lambda$-filtered diagram $D$ with objects $D_i\ (i\in I)$, then a cocone 
 $c_i \colon D_i\to C$ of $D$ is a colimit iff for every $\lambda$-presentable object $Y$ the following holds:
 \begin{enumerate}[leftmargin=2cm,label={(\roman*)}]
 \item every morphism $f\colon Y \to C$ factorizes through some $c_i$
 \end{enumerate}
 and
 \begin{enumerate}[resume*]
 \item given two such  factorizations $u$, $v\colon Y\to C$, $c_i\cdot u = c_i\cdot v$, there exists  a connecting morphism $d_{ij} \colon D_i\to D_j$ of $D$ with $d_{ij} \cdot u = d_{ij}\cdot v$.
 \end{enumerate}
This is stated as an exercise in \cite{AR}, and proved for  $\lambda =\aleph_0$  in \cite{AMSW}, Lemma~2.7.

(c) Let $\cd$ be a full subcategory of $\ck$ representing all $\lambda$-presentable objects up to isomorphism. It follows from (b) that every object $K \in \ck$ is a colimit of the $\lambda$-filtered diagram of all morphisms $a \colon A \to K$ with $ A \in \cd$. More precisely, colimit of the forgetful functor  of the slice category, $D \colon \cd/K \to \ck$ with the canonical cocone given by all $a$'s.
\end{rem}

\begin{thm}\label{T:pres}
Let $\alpha$ be  a regular cardinal  such that $\ck$ is a strictly locally $\alpha$-presentable category with a strict width.
Then  $\ck_{\leq \lambda}$ is algebraically complete for every cardinal $\lambda \geq \max (\alpha, w(\ck))$.
\end{thm}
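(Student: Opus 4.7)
The plan is to construct $\mu F$ via the initial-algebra chain of Remark~\ref{R:const}(a), running it up to stage $\lambda$ and proving convergence there. This mirrors the classical argument that a $\lambda$-accessible endofunctor of a locally $\lambda$-presentable category has initial algebra $F^\lambda 0$; here Proposition~\ref{P:lambda}, combined with the strict local $\alpha$-presentability, will play the role of $\lambda$-accessibility.

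First I would verify, by transfinite induction on $i\le\lambda$, that each $F^i 0$ lies in $\ck_{\leq\lambda}$. Successor stages are immediate from $F$ being an endofunctor of $\ck_{\leq\lambda}$. For a limit $j\le\lambda$, the colimit $\colim_{i<j}F^i 0$ is taken over a diagram of at most $\lambda$ objects of power $<\lambda$. By Lemma~\ref{L:slp} the $\lambda$-presentable objects of $\ck$ are precisely those of power $<\lambda$, and in a locally $\alpha$-presentable category such a colimit is a strong quotient of the coproduct of the diagram (itself of power $\le\lambda$), hence has power at most $\lambda$.

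Next I would prove $w_{\lambda,\lambda+1}\colon F^\lambda 0\to F(F^\lambda 0)$ is invertible. Since the subdiagram of successor stages is cofinal in the $\lambda$-chain, this reduces to showing that $F$ preserves the colimit $F^\lambda 0 =\colim_{i<\lambda}F^i 0$. Given a $\lambda$-presentable test object $Y$ and $g\colon Y\to F(F^\lambda 0)$, I decompose $F^\lambda 0 =\coprod_{k\in K}Z_k$ into connected components (with $|K|\le\lambda$) and invoke Proposition~\ref{P:lambda} to factor $g$ through $F(c)\colon F(C)\to F(F^\lambda 0)$ for some subcoproduct $c\colon C\to F^\lambda 0$ of power $<\lambda$. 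Since $C$ is now $\lambda$-presentable and the chain is $\lambda$-filtered by regularity of $\lambda$, the morphism $c$ factors through some $w_{i_0,\lambda}\colon F^{i_0}0\to F^\lambda 0$, producing the desired lift of $g$ into $F(F^{i_0}0)$. Uniqueness modulo moving further along the chain uses the retraction property of Definition~\ref{D:lp}, which equalizes two competing lifts after passing to a later stage.

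With $\mu F := F^\lambda 0$ and $\iota := w_{\lambda,\lambda+1}^{-1}$, the unique homomorphism into any $F$-algebra $(B,\beta)$ with $B\in\ck_{\leq\lambda}$ is obtained by the standard transfinite recursion: $h_0 = {!}\colon 0\to B$, $h_{i+1}=\beta\cdot F h_i$, and $h_j=\colim_{i<j}h_i$ at limits; then $h:=h_\lambda$ is the desired homomorphism, uniqueness following from the colimit's universal property at stage $\lambda$. The main obstacle is the preservation step: Proposition~\ref{P:lambda} is phrased for subcoproduct diagrams whereas the chain colimit is not naturally of that form, and bridging them via the retraction property of strict local $\alpha$-presentability is the technical heart of the argument.
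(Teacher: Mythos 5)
Your proposal is correct and follows essentially the same route as the paper: both run the initial-algebra chain to stage $\lambda$ (keeping limit stages in $\ck_{\leq\lambda}$ via the strong-quotient-of-a-coproduct argument) and prove convergence by showing $F$ preserves the $\lambda$-chain colimit, with Proposition~\ref{P:lambda} supplying the factorizations through $\lambda$-presentable subcoproducts and the retraction property of strict local presentability supplying the equalization of competing lifts. The paper merely organizes the preservation step slightly differently, passing to the canonical $\lambda$-filtered diagram of $\lambda$-presentable objects over $F^\lambda 0$, in which the chain is cofinal, so that the retraction's codomain is automatically an object of the diagram --- which is precisely the ``bridging'' you flag as the technical heart.
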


\begin{proof}
Following Remark 
\ref{R:const}, it is sufficient to prove that 
 $\ck_{\leq\lambda}$ has colimits of $i$-chains for all ordinals $i\leq \lambda$, and
   every endofunctor of $\ck_{\leq \lambda}$ preserves colimits of $\lambda$-chains.

(1) We first prove that $\ck_{\leq \lambda}$ is closed in $\ck$ under strong quotients $e\colon B\to C$. Thus, assuming that $B$ has power at most $\lambda$, we prove  the same about $C$. Let $B$ be a coproduct of connected objects $B_i (i \in I)$ with $|I| \leq\lambda$.
For every $i\in I$ the component $e_i\colon B_i \to C$ of $e$ factorizes by Proposition~\ref{P:lambda} through the subcoproduct $\coprod\limits_{j\in J(i)} C_j$ for some $J(i) \subseteq J$ of power less than $\lambda$. The union $K=\bigcup\limits_{i\in I} J(i)$ has power at most $\lambda^2 =\lambda$, and $e$ factorizes through the coproduct injection $v_k\colon\coprod\limits_{j\in K} C_j \to C$.
 Since $e$ is a  strong epimorphism, so is $v_K$. But being a coproduct injection, $v_K$ is also monic. Thus $v_K$ is an isomorphism, proving that  $C =\coprod\limits_{j\in K} C_j$ has  power at most $ |K|\leq \lambda$.

\smallskip
(2) $\ck_{\leq \lambda}$ has for every limit ordinal $i\leq \lambda$ colimits of $i$-chains $(B_j)_{j<i}$. In fact, let $X$ be the colimit of that chain in $\ck$, then we verify that $X$ has power at most $\lambda$. Indeed, $X$ is a strong quotient of $\coprod\limits_{j<i} B_j$. Each $B_j$ is a coproduct of at most $\lambda$ connected objects, thus, $\coprod\limits_{j<i} B_j$ is a coproduct of at most $i\cdot \lambda =\lambda$ connected objects. Due to (1) the same holds for~$X$.

\smallskip
(3) For every endofunctor $F\colon \ck_{\leq \lambda} \to \ck_{\leq \lambda}$ and every $\lambda$-chain $B_i$ ($i<\lambda$) in $\ck_{\leq}$ we prove that $F$ preserves the colimit
$$
X =\underset{i <\lambda}{\colim}\, B_i \quad \mbox{(with cocone $b_i\colon B_i\to X$, $i<\lambda $)}.
$$

\smallskip
Let us choose a small full subcategory $\cd$ of $\ck$ representing all $\alpha$-presentable objects. By (c) in the previous remark
 $X$ is a canonical colimit of the $\alpha$-filtered diagram 
$$
D\colon \cd\big/X \to \ck_{\leq \lambda}\,, \quad D(A,a) =A\,.
$$
The functor $B\colon \lambda \to \cd\big/X$ given by $i\mapsto (B_i, b_i)$ is final, i.e., for every object $(A,a)$ of $\cd\big/X$\ (a) there exists a morphism of $\cd\big/X$ into some $(B_i, b_i)$ and (b) given a pair of morphisms $u$, $v \colon (A,a) \to (B_i, b_i) $ there exists $j\geq i$ with $u$ and $v$ merged by the connecting morphism $b_{ij} \colon B_i \to B_j$ of our chain.
Indeed, since $A$ is $\alpha$-presentable and $\lambda \geq \alpha$, the morphism $a\colon A\to \underset{i<\lambda}{\colim}\, B_i $ factorizes through $b_i$ for some $i<\lambda$. And since $u$, $v$ above fulfil $b_i \cdot u=b_i \cdot v$ ($=a$), some connecting morphism $b_{ij}$ also merges $u$ and $v$.

Consequently, in order to prove that $F$ preserves the colimit $X=\colim B_i$, it is sufficient to verify that it preserves the colimit of  $ D_0$, where $D_0\colon \cd\big/X \to \ck_{\leq \lambda}$ is the codomain restriction of  $D$ above: since $B\colon \lambda \to \cd\big/X$ is final, the colimits of the diagrams $F\cdot D_0$ and $(FB_i)_{i<\lambda}$ coincide. We apply Remark~\ref{R:gener}(b) with $\alpha$ in place of $\lambda$, and verify the  conditions (i) and (ii) for the cocone $Fa \colon FA \to FX$ of $F \cdot D$ (in $\ck$). Thus $FX=\colim F \cdot D$ in $\ck$ which implies $FX=\colim F \cdot D_0$ in $\ck_{\leq\lambda}$.

\medskip
Ad (i) Given a morphism $f\colon Y\to FX$ with $Y$ $\alpha$-presentable, then $Y$ has by Lemma~\ref{L:slp} power less than $\alpha$, thus, by Proposition~2.9 there exists a coproduct injection $c\colon C\to X$ with $C$ $\alpha$-presentable such that $f$ factorizes through $Fc$ (which is a member of our cocone).

 \medskip
 Ad (ii) Let $u$, $v\colon Y\to FA$, with $A$ $\alpha$-presentable, fulfil $Fa \cdot u = Fa\cdot v$. We are to find a connecting morphism

 $$
 h\colon (A, a) \to (B, b)\quad \mbox{in\ \ $\cd\big/X$}
 $$
 with $Fh\cdot u = Fh \cdot v$. By the strictness of $\ck$, since $A$ is $\alpha$-presentable, for $a\colon A\to B$ there exist morphisms
 $$
 b\colon B\to X \quad \mbox{and} \quad f\colon X\to B
 $$
 with $B$ $\alpha$-presentable and $a=b\cdot f\cdot a$. It is sufficient to put
 $$ 
 h= f\cdot a\colon A\to B\,.
 $$
 Then $Fa \cdot u = Fb\cdot v$ implies $Fh\cdot u = Fh \cdot v$, as desired.
 \end{proof}
 
%

 \begin{exa}\label{E:set}
\hfill

(1) The category $\Set_{\leq \aleph_0}$ of countable sets is algebraically complete, but  not algebraically cocomplete (Example
\ref{E:new}).
 
 (2)  For every uncountable regular cardinal $\lambda$ the category $\Set_{\leq\lambda}$ is algebraically complete (by Theorem~\ref{T:pres}) and, assuming GCH, algebraically cocomplete (by Theorem~\ref{T:lambda}).
 The former was already proved in \cite{AK-79}, Example~14, using an entirely different method.

(3) For non-regular uncountable cardinals $\lambda$ the category $\Set_{\leq \lambda}$ need not be algebraically cocomplete, see Example 5.1.
 
\end{exa}

\begin{exa}\label{E:general} Let $\lambda$ be a regular uncountable cardinal.
The following categories are algebraically complete and, assuming GCH, algebraically cocomplete:

(a) $\Set_{\leq \lambda}^S$ whenever $\lambda > |S|$,

(b) $K$-$\Vec_{\leq \lambda}$  whenever $\lambda>|K|$,

(c) $\Nom_{\leq \lambda}$, and

(d) $G$-$\Set_{\leq \lambda}$ for groups $G$ with $\lambda > 2^{|G|}$.

\noindent
This follows from Theorems~\ref{T:lambda} and \ref{T:pres}.
\end{exa}


 \section{Precontinuous Set Functors and ultrametrics} 

 In the preceding parts we have presented  categories in which every endofunctor $F$ has an initial algebra $\mu F$ and a terminal coalgebra $\nu F$. The category of sets is, of course, not one of them. However, for set functors  we know that $\mu F$ exists whenever $\nu F$ does, see \cite{TAKR}. We observe below that $\mu F$, as a  coalgebra, is a subcoalgebra of $\nu F$. Moreover, 
 we introduce a wide class of set functors we call \textit{precontinuous} and for them we will see, assuming GCH, that $\nu F$ carries a canonical ultrametrics such that, whenever $F\emptyset \neq \emptyset, $ 
 \begin{enumerate}
 \item[(1)] the ultrametric subspace $\mu F$ has the same Cauchy completion as $\nu F$
 \end{enumerate}
 and
 \begin{enumerate} 
\item[(2)] the coalgebra  structure $\tau\colon \nu F \to F(\nu F)$ is  the unique continuous extension of $\iota^{-1}$, the inverted algebra structure $\iota \colon F(\mu F)  \to \mu F$.
\end{enumerate}
Thus one can say that the terminal  coalgebra  is determined, via its ultrametric, by the initial algebra.

For finitary set functors which also preserve limits of  $\omega^{op}$-sequence Barr proved more: $\nu F$ is a complete space which is the Cauchy completion of $\mu F$, see \cite{B}.

 \begin{prop}[$\mu F$ as a subcoalgebra of $\nu F$]\label{P:sub}
 If a set functor $F$ has  a terminal coalgebra, then it also has an initial algebra carried by a subset 
 $$
 \mu F \subseteq \nu F
 $$
 such that the inclusion map $m \colon \mu F \hookrightarrow \nu F$ is the unique coalgebra homomorphism
 $$
\xymatrix{ \mu F \ar[r]^{\iota^{-1}} \ar[d]_m & F(\mu F)
\ar[d]^{Fm}\\
\nu F \ar[r]_{\tau\ } & F(\nu F)
} 
$$
\end{prop}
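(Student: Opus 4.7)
The approach is to construct $\mu F$ as a subset of $\nu F$ by forcing the initial-algebra chain of Remark~\ref{R:const} to stabilize inside $\nu F$. First I would reduce to the case that $F$ preserves monomorphisms: by Remark~\ref{R:const}(e), $F$ can be replaced by a functor $G$ that preserves monos, coincides with $F$ on all nonempty sets and functions, and shares the initial-algebra chain from $\omega$ onward. Since $\tau$ is an isomorphism $\nu F \to F(\nu F) = G(\nu F)$, the pair $(\nu F, \tau)$ is also terminal in $\Coalg G$, so it suffices to prove the proposition for $G$ (the degenerate case $\nu F = \emptyset$ is trivial, as then $F\emptyset = \emptyset$ and $\mu F = \emptyset$).

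Next I would define a cocone $u_i \colon F^i\emptyset \to \nu F$ on the initial-algebra chain by transfinite recursion: $u_0$ is the unique empty map, $u_{i+1} = \tau^{-1} \cdot Fu_i$, and at a limit $j$ the morphism $u_j$ is induced from the colimit $F^j\emptyset = \colim_{i<j} F^i\emptyset$ by the cocone $(u_i)_{i<j}$. A transfinite induction, using that $F$ preserves monos and that $\tau^{-1}$ is invertible, shows that every $u_i$ and every connecting morphism $w_{ij}$ is injective; the limit step relies on the fact that any two elements $a, b \in F^j\emptyset$ with $u_j(a) = u_j(b)$ lift along some $w_{ij}$ to a common $F^i\emptyset$ with $i < j$ (by filteredness of Set-colimits), where the inductive hypothesis applies.

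Since each $u_i$ is injective, the images $u_i(F^i\emptyset) \subseteq \nu F$ form an ordinal-indexed ascending chain of subsets of the fixed set $\nu F$, and so must stabilize at some ordinal $\lambda$. Then $u_{\lambda+1}(F^{\lambda+1}\emptyset) = u_\lambda(F^\lambda\emptyset)$, and together with the cocone identity $u_{\lambda+1} \cdot w_{\lambda,\lambda+1} = u_\lambda$ and injectivity of $u_{\lambda+1}$ this forces $w_{\lambda,\lambda+1}$ to be a bijection, hence an isomorphism. By Remark~\ref{R:const}(b), $\mu F := F^\lambda\emptyset$ with algebra structure $\iota := w_{\lambda,\lambda+1}^{-1}$ is the initial algebra, and $m := u_\lambda$ is an injection realizing it as a subset of $\nu F$. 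Rearranging $u_\lambda \cdot \iota = u_{\lambda+1} = \tau^{-1} \cdot Fu_\lambda$ gives $\tau \cdot m = Fm \cdot \iota^{-1}$, so $m$ is a coalgebra homomorphism, and uniqueness is immediate from terminality of $(\nu F, \tau)$. I expect the trickiest step to be the mono-preserving reduction, since one must carefully verify that replacing $F$ by $G$ preserves both the terminal coalgebra and the initial-algebra chain from $\omega$ on, so that the bound $|F^i\emptyset| \leq |\nu F|$ used implicitly in the stabilization argument is legitimate.
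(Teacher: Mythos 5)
Your proposal is correct and follows essentially the same route as the paper: reduce to a mono-preserving functor via Remark~\ref{R:const}(e), build the canonical cocone $m_i\colon F^i0\to\nu F$ with $m_{i+1}=\tau^{-1}\cdot Fm_i$, show all $m_i$ are monic, use the fact that $\nu F$ has only a set of subobjects to find a stabilization ordinal $\lambda$ at which $w_{\lambda,\lambda+1}$ becomes invertible, and read off the initial algebra and the homomorphism square. The only differences are cosmetic (you perform the reduction to $G$ up front rather than at the end, and you spell out the limit step of the injectivity induction that the paper leaves as ``easy transfinite induction'').
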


\begin{proof}
(1) Assume first that $F$ preserves monomorphisms. Form the unique cocone of the 
initial-algebra chain (see Remark~\ref{R:const}) with co\-do\-main $\nu F$,
$$
m_i \colon F^i 0 \to \nu F \quad ( i\in \Ord)
$$ 
satisfying the recursive rule
$$
m_{i+1} \equiv F(F^i0) \xrightarrow{\ Fm_i\ } F (\nu F) \xrightarrow{\ \tau^{-1}\ } \nu F \quad (i\in \Ord)\,.
$$
Easy transfinite   induction verifies that $m_i$ is monic for every $i$. Since $\nu F$ has only a set of subobjects, there exists  an ordinal $\lambda$ such that all $m_i$ with  $i\geq \lambda$ represent the same subobject. Thus the commutative triangle below
$$
\xymatrix{ 
W_\lambda \ar[rr]^{w_{\lambda, \lambda+1}} \ar[dr]_{m_\lambda}
&& FW_\lambda \ar[dl]^{m_{\lambda+1}}\\
& \nu F &
} 
$$
implies that $w_{\lambda, \lambda+1}$ is invertible. Consequently, the following algebra
$$
F(F^\lambda 0) \xrightarrow{\ w_{\lambda, \lambda +1}^{-1}\ } F^\lambda 0
$$
is initial, see Remark~\ref{R:const}.

For the monomorphism $m_\lambda \colon F^\lambda 0 \to \nu F$ put
$$
\mu F = m_\lambda [F^\lambda 0] \subseteq \nu F\,.
$$
Choose an isomorphism $r\colon \mu F \to F^\lambda 0$  such that $m=m_\lambda \cdot r \colon \mu F \to \nu F$ is the inclusion map. Then there exists a unique algebra structure $\iota \colon F(\mu F) \to \mu F$ for which $r$ is an isomorphism of algebras:
$$
r\colon (\mu F, \iota) \xrightarrow{\ \sim\ } (F^\lambda 0, w_{\lambda, \lambda +1}^{-1})\,.
$$
The following commutative diagram
$$
\xymatrix@C=5pc@R=3pc{ 
\mu F \ar[r]^{\iota^{-1}} \ar[d]_r & F(\mu F) \ar[d]^{Fr}\\
F^\lambda 0 \ar[r]^{w_{\lambda, \lambda+1}}\ar[d]_{m_\lambda} & F(F^\lambda 0) \ar[dl]_{m_{\lambda+1}}
\ar[d]^{Fm_\lambda}\\
\nu F \ar[r]_\tau & F(\nu F)
} 
$$
proves that $m=m_\lambda \cdot r$ is the unique coalgebra  homomorphism, as required.

\vskip 1mm
(2) Let $F$ be arbitrary. We can assume $F\emptyset \neq \emptyset$. Our proposition holds  for the functor $G$ in Remark~\ref{R:const}(e). Since $F$ and $G$ agree an all nonempty sets
and $F\emptyset \ne \emptyset\ne G\emptyset$, they have the same terminal coalgebras. Since the initial algebra of $G$ is, as we  have just seen, obtained via the initial-algebra chain, and $F$ has from $\omega$ onwards the same initial-algebra chain, $F$ and $G$ have the same initial algebras. Thus, our proposition holds for $F$ too.
\end{proof}

Barr \cite{B} calls a functor \textit{continuous} if it preserves limits of $\omega^{\op}$-chains. Below we call a cone 
of an $\omega^{\op}$-chain a \textit{prelimit} if every other cone has a most one factorization through it. That is,
a prelimit is a collectively monic cone.

\begin{defi} A set functor is called \textit{{precontinuous}} if it preserves nonempty prelimits of $\omega^{\op}$-chains.
\end{defi}

\begin{exa}	\label{E:prec}
	
	\indent	(1) All finitary set functors are precontinuous.
	To verify this, we can restrict ourselves to set functors $F$ preserving finite
		intersections and inclusion and distinct from $C_0$, the constant functor of
		value $\emptyset$. In fact, the case $C_0$ is trivial, and for every
		other functor use $G$ of Remark 3.1(e): since $F$ is finitary, so is $G$.
		
		Let $a_n \colon A_{n+1} \to A_n$ be an $\omega^{\op}$-chain
		with a prelimit $b_n \colon B \to A_n$. Given distinct elements $x,x'$ of $FB$, we are to find $n$ with 
		$$
		Fb_n(x) \neq Fb_n(x').
		$$
		Since $F$ is finitary and preserves inclusion, there is a finite subset $C \subseteq B$
		with $x,x'$ lying in $FC$. The cone $(b_n)$ is collectively monic, thus there exists $n$ such that the restriction
		of $b_n$ to $C$ is monic. $F$ preserves (finite intersections, thus) monomorphisms, hence $Fb_n$ has the 
		desired propery: its restriction to $FC$ is monic.

	\indent (2) All continuous set functors are of course precontinuous. Example: polynomial functors for infinitary signatures.
	
		\indent (3)  Composites, products and coproducts of precontinuous functors are clearly precontinuous.
	
	\indent (4) Subfunctors of a precontinuous functor $F$ are precontinuous. Indeed, let $\mu \colon G \to F$ be a natural
	mono-transformation. And let $a_n \colon A_{n+1} \to A_n$ be an $\omega^{\op}$-chain with a non-empty prelimit $b_n \colon B \to A_n$. Since the cone $(Fb_n)$ is collectively monic, so is $((Fb_n) \cdot \mu_B)$. By naturality the last cone is $(\mu_{A_n} \cdot Gb_n)$.
	Thus also $(Gb_n)$ is collectively monic, as required.
	
	\indent (5) The functor $D$ of discrete probability distributions is precontinuous. Recall that a discrete probability distribution
on a set $X$ is given by a function $p \colon  X \to [0, 1]$ such $\sum_{x \in X}p(x)=1$. (Thus all $p(x)$ but countably many are $0$.) This extends to a probability distribution on $ X$ by $\mu (M)= \sum _{m \in M}  p(m)$ for all $M \subseteq X$.The functor $D$ assing to a set the set of its discrete probability distributions.
Given a function $f \colon X \to Y$, then $Df $ assigns to a distribution $\mu$ on $X$ the distribution $M \mapsto \mu (f^{-1} (M))$
for all $M \subseteq Y$. For an argument why $D$ preserves prelimits of
$\omega^{\op}$-chains see \cite{W}, Example 15.

	\indent (6) The functor $FX=(DX)^A$ is precontinous: it is a composite of $D$ and the 
	continuous functor $(-)^A$. Its coalgebras are probabilistic labelled transition systems with the set $A$ of actions.

\end{exa}

\begin{rem}
Restricting ourselves to \textit{nonempty} prelimits in the above definition is needed: finitary functors do not preserve prelimits
of $\omega^{\op}$-chains in general. Consider the functor $C_{2,1}$ taking the empty set to $2$ and all other sets to $1$.
\end{rem}

The following proof is based on ideas of Worrell \cite{W}. Recall the connecting maps $v_{i,j}$ of the terminal-coalgebra chain from Remark \ref{R:const}(d).

%
%

\begin{thm}
	For every precontinuous set functor $F$ the terminal-coalgebra chain converges in $\delta \geq \omega$
	steps and the connecting map
	$$
	v_{\delta, \omega} \colon \nu F \to F^\omega 1
	$$
	is monic.
\end{thm}

\begin{proof}
	(1) We can restrict ourselves to precontinuous  set functors preserving finite
	intersections (thus preserving monomorphisms) and inclusion and distinct from $C_0$, the constant functor of
	value $\emptyset$. In fact, the theorem is trivial for $C_0$, and for every
	other set functor $F$ all ordinals $i$ fulfil $F^i 1 \neq \emptyset$ by
	\cite{W}, Lemma 6. Thus the functor $G$ of Remark 3.1(e) has the same
	terminal-coalgebra chain as $F$. And it is precontinuous, since $F$ is.
	
	(2) The connecting
	morphisms $v_{\omega, n}: F^{\omega} 1 \to F^n 1$ for $n<\omega$ form a prelimit that $F$ preserves: the cone of $Fv_{\omega, n}=v_{\omega +1, n+1}$
	is collectively monic. Thus the factorizing morphism $v_{\omega +1, \omega}$
	of that cone is monic: recall from Remark \ref{R:const} that is it defined by the composites $v_{\omega +1, \omega} \cdot v_{\omega, n+1}=Fv_{\omega,n}$ for all $n<\omega$. 
	
	It follows that for every infinite ordinal $i$ the connecting map $v_{i,\omega}$ is monic. Let us verify it by transfinite induction.
	We have seen that this holds for $\omega +1$. If this holds for $i$ it holds also for $i+1$: We know that $Fv_{i,\omega} =v_{i+1,\omega +1}$ is monic, hence so is \
	$v_{i+1, \omega} = v_{\omega +1,\omega} \cdot v_{i, \omega +1}$, as required. Limit steps are trivial: a limit cone of a chain of monomorphisms has monic limit projections. And because $i > \omega$, the limit does not change when the first $\omega$ steps are deleted from the chain.
	
	Since the set $F^\omega 1$ has only a set of subobjects, the nonincreasing chain of subobjects $v_i, \omega$ stops after $\delta$
steps for some ordinal $\delta$, as claimed.

\end{proof}

\begin{nota}
  For a precontinuous set functor denote by $\delta$ the
  smallest infinite ordinal at which the terminal-coalgebra chain converges.
\end{nota}

\begin{rem} (1) For a precontinuous functor $F$ the initial algebra exists and has the form $\mu F=F^i 0$
for some ordinal $i \geq \delta$ (Remark \ref{R:const}(b)). Thus we have the subobject $m: F^i 0 \to F^\delta 1$ of Proposition
 4.1. The fact that this is a coalgebra homomorphism means that
$$
m=v_{\delta + 1, \delta} \cdot Fm \cdot w_{i,i+1}.
$$

	(2) All the examples of the precontinuous functors in 4.3 preserve nonempty countable intersections. (For finitary
	functors the verification is analogous to the proof of Proposition 10 in \cite{W}.) It then follows that we can choose
	$$
	\delta = \omega + \omega.
	$$
	Indeed, the limit defining $F^{\omega + \omega} 1$ is just the intersection of the subobjects $v_{\omega +i, \omega}$ for $i < \omega$.
	Thus $F$ preserves that limit which means that the terminal-coalgebra chain converges in  $\omega + \omega$ steps.

(3) Recall the morphism $\bar u$ from Remark 3.1(f):
	
\end{rem}

\begin{lem}\label{L:uu} Every precontinuous functor fulfils $\bar u = v_{\delta, \omega} \cdot m \cdot w_{\omega,i}.$
\end{lem} 

\begin{proof} We prove that the squares defining $\bar u$ in 3.1(f) commute when we substitute the right-hand
	side of our equation for $\bar u$. That is, we prove
$$ 
	v_{\omega, n} \cdot [ v_{\delta, \omega} \cdot m \cdot w_{\omega,i}] \cdot w_{n,\omega} = F^n ! : F^n 0 \to F^n 1
$$
This simplifies to 
$$
	v_{\delta,n} \cdot m \cdot w_{n, i} = F^n ! : F^n 0 \to F^n 1
$$
The proof is by induction. The case $n=0$ is clear. Assuming the equation holds for $n$ we prove it for $n+1$.  Using 4.6(1) we get
$$
	v_{\delta,n+1} \cdot m\cdot w_{n+1 ,i}=
	v_{\delta ,n+1} \cdot v_{\delta +1, \delta} \cdot Fm \cdot w_{i,i+1}\cdot w_{n+1 ,i}.
$$
This simplifies to

$$
 	v_{\delta +1,n+1} \cdot Fm\cdot w_{n+1 ,i+1}=	Fv_{\delta ,n} \cdot Fm \cdot Fw_{n, i}
$$
which is equal to  $ F^{n+1} !$: apply $F$ to the induction hypothesis.
	\end{proof}

 \begin{rem}\label{C:W}
%
%
%
For every precontinuous functor
 $\nu F$ is a canonical subset of $F^\omega 1 $ via $v_{\delta, \omega}$. And this endows $\nu F$ with a canonical ultrametric, as our next lemma explains.
 Recall that a metric  $d$ is called an \textit{ultrametric} if for all elements $x$, $y$, $z$ the triangle inequality can be strengthened to $d(x,z)\leq \max (d(x,y), d(y,z))$.
 \end{rem}

\begin{lem}\label{L:fin}
Every limit $L$ of an $\omega^{\op}$-chain  in $\Set$ carries a complete ultrametric: assign to $x\ne y$ in $L$ the distance $2^{-n}$ where $n$ is the least natural number such that the corresponding  limit projection separates $x$ and $y$.
\end{lem}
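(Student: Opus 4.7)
The plan is to identify elements of $L = \lim_{n<\omega} L_n$ with compatible families $x = (x_n)_{n<\omega}$, where $x_n = \ell_n(x) \in L_n$ is the value at the $n$-th limit projection and the connecting maps $c_{m,n}\colon L_m \to L_n$ (for $m \geq n$) satisfy $c_{m,n}(x_m) = x_n$. Since the projections $\ell_n$ jointly separate points of $L$, for any $x\ne y$ the set $\{n : \ell_n(x) \ne \ell_n(y)\}$ is nonempty, its minimum $n(x,y)$ exists, and $d(x,y) = 2^{-n(x,y)}$ is well-defined. Setting $d(x,x)=0$, I then need to check two things: the ultrametric axioms, and completeness.

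For the ultrametric axioms, symmetry and positive definiteness are immediate. For the strong triangle inequality, I would argue as follows: given $x,y,z$ with $d(x,y) = 2^{-n}$ and $d(y,z) = 2^{-m}$, set $k = \min(n,m)$. Then $\ell_i(x) = \ell_i(y)$ for all $i<n$ and $\ell_i(y) = \ell_i(z)$ for all $i<m$, so in particular $\ell_i(x) = \ell_i(z)$ for all $i<k$. By definition of $d$ this yields $d(x,z) \leq 2^{-k} = \max(d(x,y), d(y,z))$.

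For completeness, I would take an arbitrary Cauchy sequence $(x^{(k)})_{k<\omega}$ and construct its limit directly. For every $n$ there is an index $K_n$ with $d(x^{(j)}, x^{(k)}) \leq 2^{-(n+1)}$ whenever $j,k \geq K_n$, which by the definition of $d$ means $\ell_i(x^{(j)}) = \ell_i(x^{(k)})$ for all $i \leq n$. Define $y_n = \ell_n(x^{(K_n)})$; this is the eventually constant value of the $n$-th projection along the sequence. Compatibility $c_{m,n}(y_m) = y_n$ follows because for any sufficiently large $k$ both $y_m = \ell_m(x^{(k)})$ and $y_n = \ell_n(x^{(k)})$ hold simultaneously, and $c_{m,n} \ell_m = \ell_n$. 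Hence $y = (y_n)$ determines an element of $L$, and the estimate $d(x^{(k)}, y) \leq 2^{-(n+1)}$ for $k \geq K_n$ shows that $x^{(k)} \to y$.

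There is no substantive obstacle here: the argument is essentially the standard construction of a point in a countable inverse limit of sets, rewritten as a metric completeness statement. The only delicate point is confirming that the candidate limit $y$ is a genuinely compatible family, which is forced by the compatibility already enjoyed by any single tail term of the Cauchy sequence.
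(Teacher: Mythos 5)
Your proof is correct and follows essentially the same route as the paper: the strong triangle inequality comes from the fact that agreement under $\ell_i$ for all $i$ below $\min(n,m)$ is inherited by $x$ and $z$, and completeness is obtained by assembling the eventually constant projection values of a Cauchy sequence into a compatible family, hence an element of $L$ (the paper merely phrases this via a subsequence $x_{r(k)}$ with increasing indices rather than your stabilization indices $K_n$). No gaps.
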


\begin{proof}
Let $l_n\colon L\to  A_n$ ($n\in \N$) be a limit cone of an $\omega^{\op}$-chain $a_n\colon A_{n+1} \to A_n$ ($n\in \mathbb{N}$). For the above function
$$
d(x,y) = 2^{-n}
$$
where  $l_n(x) \ne l_n(y)$ and $n$ is the least such number  we see that $d$ is symmetric. It satisfies the ultrametric inequality
$$
d(x,z) \leq \max \big(d(x,y), d(y,z)\big) \quad \mbox{for all\ }\ x,y,z \in L\,.
$$
This is obvious if the three elements are not pairwise distinct. If they are, the inequality follows from the  fact that if $l_n$ separates  two elements, then so do all $l_m$ with $m\geq n$.

 It remains to prove that the space $(L,d)$ is  complete. Given a Cauchy sequence $x_r \in L$ ($r\in \N$), then for every $k\in \N$ there exists $r(k) \in \N$ with
$$
d(x_{r(k)}, x_n) < 2^{-k} \quad \mbox{for all} \ \ n\geq r(k)\,.
$$
Choose $r(k)$'s to form an increasing sequence. Then  $d(x_{r(k)}, x_{r(k+1)}) < 2^{-k}$, i.e., $l_k(x_{r(k)}) = l_k(x_{r(k+1)})$. Therefore, the elements $y_k=l_k(x_{r(k)})$ are compatible: we have $a_{k+1}(y_{k+1}) = y_k$ for all $k\in \N$. Consequently, there exists a unique $y\in L$ with $l_k(y) = y_k$ for all $k\in \N$. That is, $d(y, x_{r(k)})< 2^{-k}$. Thus, $y$ is the  desired limit:
\[
y=\lim_{k\to\infty} x_{r(k)} \quad \mbox{implies}\quad y=\lim_{n\to \infty} x_n\,.
\tag*{\qedhere}
\]
\end{proof}

We conclude  that for a finitary set functor both $\nu F$ and $\mu F$ carry a canonical  ultrametric: $\nu F$ as a subspace of $F^\omega 1$ via $v_{\omega +\omega, \omega }\colon \nu F \to F^\omega 1$, and $\mu F$ as a subspace  of $\nu F$ via $m$. Given $t\ne s$ in $\nu F$ we have $d(t,s) = 2^{-n}$ for the least $n\in \N$ with $v_{\omega +\omega, n} (t) \ne v_{\omega +\omega, n} (s)$.
The isomorphism $\tau \colon \nu F \xrightarrow{\ \sim\ } F(\nu F)$ then makes $F(\nu F)$ also a canonical ultrametric space, analogously for $F(\mu F)$.

\begin{nota}\label{N:p}
Given a precontinuous set functor $F$ with $F\emptyset \ne \emptyset$, choose an element
$$
p\colon 1\to F\emptyset\,.
$$
This defines the following morphisms for every $n\in \N$:
\begin{align*}
e_n &\equiv F^n 1 \xrightarrow{\ F^n p\ } F^{n+1}0 \xrightarrow{\ w_{n+1, \omega}\ }
F^\omega 0 \xrightarrow{\ \bar u\ } F^\omega 1\\
\intertext{and}
\varepsilon_n &= e_n \cdot v_{\omega, n} \colon F^\omega 1 \to F^\omega 1\,.
\end{align*}
\end{nota}

\begin{obs}\label{O:various}
\hfill

(a) For every $n\in \N$ we have a commutative square below
$$
\xymatrix@C=4pc{
F^n 1 \ar[r]^{F^np} \ar@{=}[d] & F^{n+1} 0\ar[d]^{F^{n+1} !}\\
F^n 1 & F^{n+1} 1 \ar [l]^{v_{n+1, n}}
} 
$$
This is obvious for $n=0$. The induction step just applies $F$ to the  given square.

\quad (b)\qquad   $v_{\omega,n} \cdot e_n =\id_{F^n 1}$\,.

  Indeed, in the following diagram
  $$
\xymatrix@C=4pc{
F^n 1 \ar[r]^{F^np} \ar@{=}[dd] & 
F^{n+1} 0\ar[r]^{w_{n+1, \omega}} \ar[dr]^{F^{n+1}!}& F^\omega 0 \ar[r]^{\bar u} & 
F^\omega 1  \ar[dd]^{v_{\omega, n}} \ar[dl]^{v_{\omega, n+1}} \\
&&F^{n+1} 1 \ar[dr]^{v_{n+1,n}}&\\
F^n 1 \ar@{=}[rrr] &&& F^n 1
} 
$$
the upper right-hand part commutes by the definition of $\bar u$, see Remark~\ref{R:const}(f),  the left-hand one does by (a), and the lower right-hand triangle is clear.

\quad (c) \qquad $v_{\omega,n} \cdot \varepsilon_n = v_{\omega,n}$\,.

This follows from (b): precompose it with $v_{\omega,n}$.

\quad (d) \qquad $\varepsilon_n\cdot \varepsilon_{n+1}=\varepsilon_n$. Indeed, we have
\begin{align*}
\varepsilon_n\cdot \varepsilon_{n+1} &  = \bar u \cdot w_{n+1, n} \cdot F^np \cdot v_{\omega, n} \cdot \varepsilon_{n+1} && \mbox{def. of $\varepsilon_n$}\\
&= \bar u  \cdot w_{n+1,n} \cdot F^n p\cdot (v_{n+1}\cdot v_{\omega, n+1}) \cdot \varepsilon_{n+1}&&\\
&= \bar u  \cdot w_{n+1,n} \cdot F^n p\cdot v_{n+1}\cdot v_{\omega, n+1} \cdot \varepsilon_{n+1}&& \mbox{by (c)}\\
&= \bar u  \cdot w_{n+1,n} \cdot F^n p\cdot v_{\omega, n}&&\\
&=\varepsilon_n\,.
\end{align*}

\end{obs}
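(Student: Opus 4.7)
The plan is to dispatch the four items in order, since each later part leans on an earlier one; the only genuine work is in items (a) and (b), and (c)--(d) are then essentially bookkeeping.

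For (a) I would proceed by induction on $n$. The base case $n=0$ reduces to checking the identity $v_{1,0}\cdot F!\cdot p=\id_{1}$: since the codomain is the terminal set $1$, every endomorphism of $1$ is $\id_{1}$, so there is nothing to compute. For the induction step, I would simply apply $F$ to the square valid at stage $n$. The key point is that the bottom edge of the resulting square is $Fv_{n+1,n}$, and by the very definition of the terminal-coalgebra chain in Remark~\ref{R:const}(d) this equals $v_{n+2,n+1}$; the top and right edges become $F^{n+1}p$ and $F^{n+2}!$ respectively, giving exactly the square at stage $n+1$.

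For (b) I would unfold the definition of $e_n$ as $\bar u\cdot w_{n+1,\omega}\cdot F^n p$ and invoke the characterisation of $\bar u$ from Remark~\ref{R:const}(f), which gives
\[
v_{\omega,n+1}\cdot\bar u\cdot w_{n+1,\omega}=F^{n+1}!.
\]
Pre-composing with $v_{n+1,n}$ and using $v_{\omega,n}=v_{n+1,n}\cdot v_{\omega,n+1}$ converts $v_{\omega,n}\cdot e_n$ into $v_{n+1,n}\cdot F^{n+1}!\cdot F^n p$, which is the identity by part (a). This is the step I expect to be the main obstacle: one has to juggle the definition of $\bar u$ together with the telescoping of the $v_{\omega,\cdot}$ cone, and it is easy to lose track of which index is which.

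For (c), the identity $\varepsilon_n=e_n\cdot v_{\omega,n}$ together with (b) gives
\[
v_{\omega,n}\cdot\varepsilon_n=(v_{\omega,n}\cdot e_n)\cdot v_{\omega,n}=\id\cdot v_{\omega,n}=v_{\omega,n},
\]
with nothing further needed. For (d) I would expand both $\varepsilon_n$ and $\varepsilon_{n+1}$ and regroup the middle factor $v_{\omega,n}\cdot e_{n+1}$: writing $v_{\omega,n}=v_{n+1,n}\cdot v_{\omega,n+1}$ and applying (b) at level $n+1$ collapses this middle factor to $v_{n+1,n}$, after which the remaining $v_{n+1,n}\cdot v_{\omega,n+1}$ folds back to $v_{\omega,n}$ and the whole composite returns to $e_n\cdot v_{\omega,n}=\varepsilon_n$. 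No additional tools beyond the cone equations for the $v_{i,j}$ and the identity from (b) are required.
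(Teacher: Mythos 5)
Your proposal is correct and follows essentially the same route as the paper: induction plus terminality of $1$ for (a), the defining squares of $\bar u$ together with the cone identity $v_{\omega,n}=v_{n+1,n}\cdot v_{\omega,n+1}$ for (b), and then formal regrouping for (c) and (d). The only (inessential) deviation is in (d), where you collapse the middle factor $v_{\omega,n}\cdot e_{n+1}$ using (b) at level $n+1$ directly, while the paper invokes (c) at level $n+1$ — an immediate consequence of (b) — to the same effect.
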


\begin{thm}\label{T:fin} 
For a precontinuous set functor $F$ with $F \emptyset \ne \emptyset$ the Cauchy completions of the ultrametric spaces $\mu F$ and $\nu F$ coincide. And the algebra structure $\iota$ of $\mu F$ determines the coalgebra structure $\tau$ of $\nu F$  as the unique continuous extension of $\iota^{-1}$.

\end{thm}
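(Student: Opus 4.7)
The plan is to show that $\mu F$ is a \emph{dense} subspace of $\nu F$ in the canonical ultrametric. Both halves of the theorem then fall out of standard facts: Cauchy completions are invariant under passage to a dense subspace, and a continuous map defined on a dense subset extends in at most one continuous way to the whole space.

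For density, I will exhibit an explicit approximating sequence. Given $t\in\nu F$, identified via the monomorphism $v_{\omega+\omega,\omega}$ with an element of $F^\omega 1$, consider the elements $\varepsilon_n(t)\in F^\omega 1$ of Notation~\ref{N:p}. Since $\varepsilon_n = \bar u\cdot w_{n+1,\omega}\cdot F^n p\cdot v_{\omega,n}$ factors through $\bar u$, and since $\bar u = v_{\omega+\omega,\omega}\cdot m$ by Remark~\ref{C:W}, each $\varepsilon_n(t)$ lies in the image of $m$, hence in $\mu F$. For the distance, Observation~\ref{O:various}(c) combined with $v_{\omega,k} = v_{n,k}\cdot v_{\omega,n}$ for $k\le n$ yields $v_{\omega,k}(\varepsilon_n(t)) = v_{\omega,k}(t)$ for all $k\le n$. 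By the definition of the metric in Lemma~\ref{L:fin}, therefore $d(\varepsilon_n(t),t)\le 2^{-(n+1)}$, so that $\varepsilon_n(t)\to t$ in $F^\omega 1$ and hence in the subspace $\nu F$. This establishes density.

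With density in hand, the Cauchy completions of $\mu F$ and $\nu F$ coincide: both equal the closure of $\mu F$ inside the complete ultrametric space $F^\omega 1$ of Lemma~\ref{L:fin}. For the second claim, Proposition~\ref{P:sub} gives $\tau\cdot m = Fm\cdot \iota^{-1}$, i.e., $\tau$ restricted along $\mu F\hookrightarrow \nu F$ equals $\iota^{-1}$ after the isometric identification $F(\mu F)\hookrightarrow F(\nu F)$ induced by $Fm$. The map $\tau$ itself is an isometry by the very definition of the ultrametric on $F(\nu F)$, hence continuous, and by density of $\mu F$ in $\nu F$ it is thus the unique continuous extension of $\iota^{-1}$. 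The only genuine subtlety I anticipate is bookkeeping: four ultrametric spaces ($\mu F$, $\nu F$, $F(\mu F)$, $F(\nu F)$) must be embedded consistently, but once Proposition~\ref{P:sub} is used to see that $Fm$ is an isometry, everything aligns and the short calculation with the $\varepsilon_n$ does the rest.
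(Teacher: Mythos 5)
Your proof is correct and follows essentially the same route as the paper: density of $\mu F$ via the approximations $\varepsilon_n$ together with Observation~\ref{O:various}(c), and then Proposition~\ref{P:sub} to recognize $\tau$ as the (unique, by density) continuous extension of $\iota^{-1}$. The only cosmetic differences are that the paper first replaces $F$ by an inclusion-preserving functor (Remark~\ref{R:const}(e)) so that $m$ and $Fm$ become literal inclusion maps, whereas you track the isometric embeddings directly, and that the paper runs the $\varepsilon_n$-argument for arbitrary $x\in F^\omega 1$, thereby identifying the common Cauchy completion concretely as $F^\omega 1$ itself.
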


\begin{proof}
Assume first that $F$ preserves inclusion (Remark \ref{R:const}(e)). 

(a) We prove that the subset
$$
\bar u = v_{\delta, \omega} \cdot m \cdot w_{\omega ,i} \colon \mu F \to F^\omega 1
$$
 of Lemma~\ref{L:uu} is dense in $F^\omega 1$, thus, the complete space $F^\omega 1$ is a Cauchy completion of both $\bar u [\mu F]$ and  $v_{\delta, \omega} [\nu F]$.
 
 For every $x\in F^\omega 1$ the sequence $\varepsilon_n(x)$ lies in the image of $e_n\cdot v_{\omega,n}$ which, in view of the definition of $e_n$, is  a subset of the image  of $\bar u$. And we have 
 $$
 x=\lim_{n\to\infty} \varepsilon _n(x)
 $$
 because Observation~\ref{O:various} (c) yields $v_{\omega,n} (x) = v_{\omega,n}(\varepsilon_n(x))$, thus
 $$
 d\big(x, \varepsilon_n(x)\big) < 2^{-n}\quad \mbox{for all\ }\ n\in \N\,.
 $$
 
 (b) 
 The continuous map $\iota^{-1}$ has at most one continuous extension to $\nu F$. And $\tau$ is such an extension: it is not only continuous, it is an isometry. And it extends $\iota^{-1}$ by Proposition~\ref{P:sub}: 
$$
\xymatrix@C=4pc{
\mu F \ar[r]^{\iota^{-1}}
\ar@{_{(}->} [d]_{m} & F(\mu F) \ar@{_{(}->}[d]^{Fm}\\
\nu F \ar[r]_\tau & F(\nu F)
} 
$$
Since $Fm$ is an inclusion map, $\tau$ is an extension of $\iota^{-1}$.
 
Once we have established (a) and (b) for 
inclusion-preserving finitary 
 functors, it holds for all finitary set functors $F$. Indeed, 
  there exists an inclusion-preserving set functor $G$ that agrees with $F$ on all nonempty sets and functions, and fulfils $G\emptyset\ne\emptyset$, see Remark~\ref{R:const}(e). Consequently, the coalgebras for $F$ and $G$ coincide. And  the initial-algebra chains coincide from $\omega$ onwards, in particular, we can assume $F^i 0 = G^i 0$, that is, $F$ and $G$ have the same initial algebra.
 \end{proof}

 \begin{exa}\label{ex4.7}
  For the finite power-set functor $\cp_f$ the initial algebra can be described as
 $$
 \mu \cp_f= \mbox {\ all finite extensional trees,}
 $$
 (where trees are considered up to isomorphism). Recall that  a tree is called \textit{extensional} if for every node $x$ the maximum subtrees of $x$ are pairwise  non-isomorphic. And  it is called \textit{strongly extensional} if it has no nontrivial tree bisimulation. 
(A tree bisimulation is a bisimulation $R$ on the tree which relates the root with itself and relates only vertices of the same height.) 
For finite trees these two concepts are equivalent. Worrell proved in \cite{W} that the terminal coalgebra can be described as follows:
 $$
 \nu \cp_f = \mbox{\  all finitely branching strongly extensional trees}
 $$
 with the coalgebra structure inverse to tree tupling.
 Whereas the terminal-coalgebra chain yields
 $$
 \cp_f^\omega 1 = \mbox{\ all strongly extensional trees}
$$
  The metric on $\cp_f^\omega 1$ assigns to trees $t\ne s$ 
the distance $d(t,s)= 2^{-n}$, where $n$  is the least number with $\partial_n t \ne \partial_n s$. Here $\partial_n t$ is the extensional tree obtained from $t$ by 
cutting it at level $n$ and forming the extensional quotient of the resulting tree.

\end{exa}

\begin{cor}\label{C:fin} 
Assuming GCH, let $F$ be a precontinuous set functor whose initial algebra has an uncountable regular cardinality. Then the terminal coalgebra has the same cardinality.
Shortly,
$$
\mu F \simeq \nu F\,.
$$
\end{cor}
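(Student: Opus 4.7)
The plan is to prove the equality by a two-sided cardinality bound. The lower bound $|\mu F|\leq|\nu F|$ is immediate from Proposition~\ref{P:sub}, which provides a canonical monomorphism $m\colon \mu F\hookrightarrow\nu F$. The whole content of the corollary lies in the upper bound $|\nu F|\leq|\mu F|$, which I would obtain by first showing that $m[\mu F]$ is dense in $\nu F$ under the canonical ultrametric and then counting Cauchy sequences.

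Set $\kappa=|\mu F|$; since $\kappa$ is uncountable, $F\emptyset\ne\emptyset$ (otherwise the initial-algebra chain is constantly empty and $\mu F=\emptyset$), so the machinery of Notation~\ref{N:p} and Observation~\ref{O:various} applies. In the finitary case, density of $m[\mu F]$ in $\nu F$ is exactly the content of Theorem~\ref{T:fin}. In the $\omega^{\op}$-preserving case, $\nu F$ coincides with $F^\omega 1$, which is a complete ultrametric space by Lemma~\ref{L:fin}; Observation~\ref{O:various}(c) then yields $d(x,\varepsilon_n(x))<2^{-n}$ for every $x\in F^\omega 1$, and by definition the image of each $\varepsilon_n$ lies in the image of $\bar u\colon F^\omega 0\to F^\omega 1$. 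Since $F^\omega 0$ is an initial segment of the initial-algebra chain of $\mu F$, and the restriction of $m$ to $F^\omega 0\subseteq\mu F$ coincides with $\bar u$ by the universal property of the limit $F^\omega 1$, the image of $\bar u$ lies inside $m[\mu F]$, whence density follows.

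Once density is in hand, every element of $\nu F$ is realized as the limit of a Cauchy sequence from $\mu F$, so $|\nu F|\leq|\mu F|^{\aleph_0}=\kappa^{\aleph_0}$. Under GCH, every uncountable regular cardinal $\kappa$ satisfies $\kappa^{\aleph_0}=\kappa$ (Remark~\ref{R:lambda}(d)), yielding $|\nu F|\leq\kappa$ and hence equality. The example $FX=A\times X+1$ with $|A|=\aleph_1$ shows that some set-theoretic assumption of this kind is unavoidable, so the GCH hypothesis is not an artifact of the proof.

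The main obstacle is the density claim in the $\omega^{\op}$-preserving case without finitariness: one must verify that $\bar u$ genuinely factors through $m$ even when $\mu F$ properly extends $F^\omega 0$. I expect this to reduce to comparing two coalgebra homomorphisms from the subobject $F^\omega 0\hookrightarrow\mu F$ into $\nu F=F^\omega 1$ and invoking the uniqueness supplied by the universal property of the limit. Once this routine identification is made, the rest is the cardinal arithmetic above.
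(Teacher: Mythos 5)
Your proof is correct and follows the same overall strategy as the paper: the lower bound $|\mu F|\le|\nu F|$ comes from Proposition~\ref{P:sub}, and the upper bound from the fact that $\nu F$ embeds into the Cauchy completion of $\mu F$, which has at most $\lambda^{\aleph_0}$ points. The genuine difference is in the $\omega^{\op}$-preserving case: the paper simply cites \cite{JA}, Theorem~3.7, for the statement that $F^\omega 1$ is the Cauchy completion of $\mu F$, whereas you rederive density directly from Observation~\ref{O:various}(c) (whose proof indeed nowhere uses finitarity) together with the identification of $\bar u$ with the restriction of $m$ to $F^\omega 0$; that identification is the same induction as in Remark~\ref{C:W}(1), run for the cocone $m_n=\tau^{-1}\cdot Fm_{n-1}$ of Proposition~\ref{P:sub}, so your ``routine identification'' is genuinely routine and your version is self-contained. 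You also handle the cardinal arithmetic more carefully than the paper does: the paper justifies $\lambda^{\omega}=\lambda$ by pointing to Remark~\ref{R:lambda}(b), which concerns cofinality and does not give this; the relevant clause is the GCH statement in part~(d), exactly as you invoke it. Your example $FX=A\times X+1$ with $|A|=\aleph_1$, where $|\mu F|=\aleph_1$ while $|\nu F|\ge 2^{\aleph_0}$, shows that the corollary as literally stated (with no continuum hypothesis) fails in some models of ZFC, so making the GCH dependence explicit is an improvement on the paper's exposition rather than an artifact of your argument.
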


Indeed, let $\lambda = |\mu F|$. The Cauchy completion of $\mu F$ has power at most $\lambda^\omega =\lambda$ (see Remark~\ref{R:lambda}(d)), thus, $|\nu F|\leq \lambda$ by the above theorem. And $|\nu F|\geq \lambda$ follows  from Proposition~\ref{P:sub}.

\begin {rem} 
The above corollary does not extend to $\aleph_0$: The finitary set functor $FX= X \times 2+1$ has a countable initial algebra (of finite binary sequences) and an uncountable
terminal coalgebra (of finite and infinite binary sequences).

\end{rem}
\begin{exa}\label{E:tree}
 Let $\Sigma$ be a (possibly infinitary) signature
with at least one nullary symbol. We choose one, $\perp \in \Sigma_0$. 
 Recall that a \textit{$\Sigma$-tree} is an ordered tree labelled in 
 $\Sigma$ so that a node with  a label $\sigma \in \Sigma_n$ has precisely $n$ successor nodes. We  consider these trees again up to isomorphism. A $\Sigma$-tree is called \textit{well-founded} if every branch of it is finite.

By Theorem II.3.7 of \cite{AT} the  polynomial functor  $H_\Sigma X = \coprod \Sigma_n \times X^n$ has the initial algebra
\begin{align*}
\mu H_\Sigma & = \mbox{\ all well-founded $\Sigma$-trees}\\
\intertext{and the terminal coalgebra}
\nu H_\Sigma &= \mbox{\ all $\Sigma$-trees.}
\end{align*}
Moreover, $H_\Sigma$ preserves limits of $\omega^{\op}$-chains, hence, $\nu H_\Sigma = H_\Sigma^\omega 1$. The complete ultrametric on $\nu H_\Sigma$ assigns to $\Sigma$-trees $t\ne s$ the distance $d(t,s)= 2^{-n}$, where $n$ is the least number with $\partial_n t\ne \partial_ns$.  Here $\partial_nt$ is the $\Sigma$-tree obtained  cutting $t$ at height $n$ and  relabelling all leaves of height $n$  by $\perp$. We conclude that
$$
\nu H_\Sigma \quad \mbox { is the Cauchy completion of\ \, $ \mu H_\Sigma $}.
$$
Consequently, if the set of all well-founded trees has a regular uncountable cardinality, then this is also the cardinality of the set of all $\Sigma$-trees.
 
\end{exa}

The preceding example  only works if $\Sigma_0\ne \emptyset$. In case $\Sigma_0 =\emptyset$ we have $H_\Sigma \emptyset = \emptyset$ and Theorem 4.7 does not apply.

\section{Precontinuous Set Functors and CPO's} 

Analogously to the preceding section we prove that given a precontinuous set functor $F$ with $F\emptyset \ne \emptyset$ both $\nu F$ and $\mu F$ carry a canonical partial order with a common ideal $cpo$-completion. Again, the coalgebra structure $\tau\colon \nu F \to F(\nu F)$ is the unique continuous extension of $\iota^{-1}$.

\begin{nota}\label{N4.8}
Given a precontinuous set functor $F$ with $F\emptyset \ne \emptyset$ choose an element
$$
p\colon 1 \to F\emptyset\,.
$$
This   defines a partial order  $\sqsubseteq$ on the set $F^\omega 1$ as follows:
 $$
t \sqsubseteq s \quad \mbox{if}\ \ t=\varepsilon _n(s) \quad  \mbox{for some}\ \ n\in \N
 $$
for $t\ne s$ in $F^\omega 1$ (see Notation~\ref{N:p}).
 \end{nota}

 \begin{thmC}[{\cite[Theorem 3.3]{A0}}]\label{T:cpo}
 $(F^\omega 1, \sqsubseteq)$ is a $cpo$, i.e., every directed subset has a join. Moreover, every strictly increasing  $\omega$-chain in $F^\omega 1$ has a unique upper bound.
\end{thmC}

The theorem in \cite{A0} is formulated for continuous functors for which the statement is that $\nu F$ is a $cpo$. However, the proof remains the same for precontinuous functors if we formulate the result as above.

\begin{exa}\label{E:4.10}
\hfill

(1) For $\cp_f$, given strongly extensional trees $t\ne s$, then $t \sqsubseteq s$ iff $t=\partial_ns$ for some $n$.

(2) The functor $FX = \Sigma \times X+1$ has a terminal coalgebra
$
\nu F= F^\omega 1 = \Sigma ^\ast + \Sigma^\omega.
$
 For words $t\ne s$ we have
$$
t \sqsubseteq s \quad \mbox{iff \ $t$ \ is a prefix of \  $s$}.
$$

(3) More generally, given a signature $\Sigma$, for $\Sigma$-trees $t\ne s$ we have $t \sqsubseteq s$ iff $t$ is a cutting of $s$, i.e., $t=\partial_n s$ for some $n$ (see Example \ref{E:tree}).
\end{exa}

We conclude  that for a precontinuous set functor $F$ with $F\emptyset \ne \emptyset$  both $\nu F$ and $\mu F$ carry a canonical partial order: $\nu F$ as a subposet of $F^\omega 1$ via $v_{\delta, \omega}$ and $\mu F$ as a subposet of $\nu F$ via $m$. For  $t\ne s$ in $\nu F$ we have
$$
t \sqsubseteq s \quad \mbox{iff \ } v_{\delta, \omega}(t) = v_{\delta, \omega } \cdot \varepsilon_n(s) \ \mbox{for some}\ \ n\in \N\,.
$$

This partial order depends on the choice of an element $p$ of $F \emptyset$.

\begin{rem}\label{R4.11} Recall the concept of \textit{ideal completion} of a poset {P}.
%
 This is a $cpo$ $\bar P$ containing $P$ as a subposet such that for every monotone function $f\colon P\to Q$, where $Q$ is a $cpo$, there exists a unique continuous extension $f\colon \bar P\to Q$.
\end{rem}

\begin{thm}\label{T4.12}
For a precontinuous set functor $F$ with $F \emptyset \ne \emptyset$ the ideal completions of the posets $\mu F$ and $\nu F$ coincide. And the algebra structure $\iota$  of $\mu F$ determines the coalgebra structure $\tau$ of $\nu F$  as the unique continuous extension of $\iota^{-1}$.
\end{thm}

\begin{proof}
As in the proof of Theorem~\ref{T:fin},  we can assume that $F$ is preserves inclusion. We prove that $F^\omega 1$ is an ideal completion of $\mu F$ and $\nu F$.

\vskip 1mm
(1) Given $x\in F^\omega 1$, we find an $\omega$-sequence in $\mu F$ with join $x$. In fact,  the sequence $\varepsilon_n(x)$ lies in the image of  $\bar u\colon \mu F \hookrightarrow F^\omega 1$ (see the definition of $\varepsilon_n$). This is an $\omega$-sequence:
for every $n\in\N$ we have
$$
\varepsilon_n(x) \sqsubseteq \varepsilon_{n+1} (x) \sqsubseteq x\,. 
$$
Recall from Theorem~\ref{T:cpo}  that strict $\omega$-sequences have unique upper bounds in $F^\omega 1$, thus,
$$
x = \bigsqcup_{n\in \N} \varepsilon_n (x)\,.
$$

\vskip 1mm
(2) Let $D\subseteq F^\omega 1$ be a directed set with $ x =\bigsqcup D$. If $x \notin D$, then $D$ is cofinal with the sequence $\varepsilon_n (x)$, $n\in\mathbb{N}$. Indeed, every $d\in D$ fulfils $d \sqsubseteq x$, that is, $d = \varepsilon_n (x)$ for  some $n$. The rest follows from (1).

\vskip 1mm
(3) $F^\omega 1$ is an ideal completion of $\mu F$. Indeed, let $Q$ be a $cpo$ and $f\colon \mu F \to Q$ a monotone function. Extend it to $F^\omega 1$ by putting, for every $x\in F^\omega 1 - \bar u[\mu F]$
$$
\bar f(x) =\bigcup_{n\in \mathbb{N}} f\big(\varepsilon_n(x)\big)\,.
$$
This map is continuous due to (2), and the extension is unique due to (1).

\vskip 1mm
(4) $F^\omega 1$ is also an ideal completion of $\nu F$. The argument is analogous.


\vskip 1 mm
(5) The proof that $\tau$ is the unique continuous extension of $\iota^{-1}$ is completely analogous to the metric case in Theorem~\ref{T:fin}.
\end{proof}

 \section{Unexpected Finitary Endofunctors} 
 
 We know from the proof of Theorem~\ref{T:pres} that all endofunctors of $\Set_{\leq \lambda}$ 
preserve colimits of $\lambda$-chains. In particular, all endofunctors of $\Set_{\leq \omega}$ (the category of countable sets) are finitary. Nevertheless, this category is not algebraically cocomplete, as  shown in Example~\ref{E:new}. In this section we turn to singular cardinals, e.g. $\aleph_\omega = \bigvee\limits_{n<\omega} \aleph_n$ with countable cofinalities (see Remark~\ref{R:lambda}). The category $\Set_{\leq \aleph_\omega}$ is also  not algebraically cocomplete, as we demonstrate in the next example. We are going  nonetheless to  prove that all endofunctors of $\Set_{\leq\aleph_\omega}$ are finitary, i.e., they preserve all existing filtered colimits. Below  we use ideas of Section 4.6 of \cite{AT}. Here is a surprisingly simple functor which is  finitary but has no terminal coalgebra:

 \begin{exa}\label{E:omega}
 The endofunctor 
 $$ FX = \aleph_\omega \times  X
 $$
 of $\Set_{\leq \aleph_\omega}$ (a coproduct of $\aleph_\omega$ copies of $\Id$) does not have a terminal coalgebra. 
We use
 the fact that
 $$
 \big(\aleph_\omega\big)^\omega > \aleph_\omega
 $$
 see Remark~\ref{R:lambda}(b). 
The proof that $\nu F$ does not exist in $\Set_{\leq \aleph_{\omega}}$ is similar to that of Example~\ref{E:new}.
Suppose that $\tau \colon T \to \aleph_\omega \times T$ is a terminal coalgebra. For every function $f\colon \N \to \aleph_\omega$ define a coalgebra $\hat f\colon \N \to \aleph_\omega \times \N$ by
 $$
 \hat f (n) = \big(f(n) , n+1\big) \quad \mbox{for} \quad n\in \N\,.
 $$
 We obtain a unique homomorphism $h_f$:
 $$
\xymatrix@C=4pc{
\N \ar[d]_{h_f} \ar[r]^{\hat f\ \ } & \aleph_\omega \times \N\ar[d]^{\id\times h_f}\\
T \ar [r]_{\tau\ \ } & \aleph_\omega \times T
}
$$
We claim that for all pairs of functions $f$, $g\colon \N\to \aleph_\omega$ we have
$$
h_f (0) = h_g(0) \quad \mbox{implies} \quad f=g\,.
$$
Indeed, since $\tau\cdot h_f(0)  = \tau \cdot h_g(0)$, the square above yields
$$
\big(f(0), h_f(1)\big) = \big( g(0), h_g(1)\big)\,,
$$
in other words
$$
h_f(1) = h_g(1) \quad \mbox{and} \quad f(0) = g(0)\,.
$$
The first of these equations yields, via the above square,
$$
h_f(2) = h_g(2) \quad \mbox{and}\quad f(1) = g(1)\,,
$$
etc.

This is the desired contradiction: since for all functions $f\colon \N \to \aleph_\omega$ the elements $h_f(0)$ of $T$ are pairwise distinct, we get
$$
|T| \geq |\aleph_\omega^{\N} | > \aleph_\omega\,.
$$
\end{exa}

\begin{rem}\label{R:omega}
\hfill

(a) Recall that a \textit{filter\/} on a set $X$ is a collection of nonempty subsets of $X$
closed upwards and closed under finite intersections. The principle filter is one containing a singleton set. Maximum filters are called \textit{ultrafilters\/} and are characterized by the property that for every $M\subseteq X$ either $M$ or its complement  is a member.

\vskip 1mm
(b) Recall further that a cardinal $\alpha$ is \textit{measurable\/} if there exists a nonprinciple ultrafilter on $\alpha$ closed under intersections of less than $\alpha$ members. Every measurable cardinal is inaccessible, see e.g. \cite{J}, Lemma 5.27.2. Therefore, the assumption that no measurable  cardinal exists is consistent  with ZFC set theory. This also implies that every measurable cardinal is larger than $\aleph_\omega$.

\vskip 1mm
(c) To say that a cardinal $\alpha$ is not measurable is equivalent to saying that for every nonprinciple ultrafilter on $\alpha$ contains  members $U_0$, $U_1$, $U_2, \dots$ such that $\bigcap\limits_{k<\omega}U_k$ is no member.

\end{rem}

\begin{thm}\label{T:omega}
Let $\lambda$ be a cardinal of cofinality $\omega$ such that no measurable cardinal is smaller or equal to $\lambda$ (e.g. $\lambda =\aleph_\omega$). Then every endofunctor of $\Set_{\leq \lambda}$ preserves filtered colimits.
\end{thm}

 \begin{proof}\hfill
 
(1) $\Set_{\leq \lambda}$ has colimits of $\lambda$-chains and every endofunctor preserves them.

Indeed, let a chain with objects $A_i$ and  connecting morphisms $a_{i,j}$ ($i\leq j<\lambda$) be given  in $\Set_{\leq\lambda}$. If $a_i\colon A_i\to A$ ($i<\lambda$) is a colimit in $\Set$, then this is also a colimit in $\Set_{\leq\lambda}$ because
$$
|A| \leq  \sum_{i<\lambda} |A_i| \leq \lambda^2 =\lambda\,.
$$

For every endofunctor $F$ we prove that $(Fa_i)_{i<\lambda}$ is also a colimit cocone in $\Set$ (thus in $\Set_{\leq \lambda}$). This is equivalent to proving that the cocone has properties (i) and (ii) of Remark~\ref{R:gener}. That is
\begin{enumerate}
\item[(i)]$FA$ is the union of the images of $Fa_i$,
\end{enumerate}
and
\begin{enumerate}
\item[(ii)] given $i<\lambda$, every pair of elements of $FA_i$ that $Fa_i$ merges is also merged by $Fa_{ij}$ for some connecting morphism $a_{i,j} \colon A_i \to A_j$.
\end{enumerate}

For (i) use Proposition~\ref{P:lambda} which, by Remark \ref{R:nova}, applies to nonregular cardinals: given  $b\in FA$ there exists a subset $c\colon C\hookrightarrow A$ with $|C|<\lambda$ such that $b\in Fc[FC]$. It follows that the subset fulfils $c\subseteq a_i$ for some $i<\lambda$, hence $b\in Fa_i[FA_i]$.

For (ii), let $x_1$, $x_2\in FA_i$ fulfil $Fa_i(x_1) = Fa_i(x_2)$. As above, there exists a subset $c\colon C\hookrightarrow A_i$ with  $|C|<\lambda$ such that $x_1$, $x_2\in Fc[FC]$. Since $|C\times C| <\lambda$, we can find an ordinal $j$ with $i\leq j<\lambda$ such that every pair in $C\times C$ merged by $a_i$ is also merged by $a_{i,j}$. In other words
$$
\ker a_i\cdot c \subseteq \ker a_{i,j} \cdot c\,.
$$
Consequently, $a_i\cdot c$ factorizes through $a_{i,j}\cdot c$:
$$
\xymatrix {
C\ar@{_{(}->} [d]_{c}& & \\
A_i \ar[rr]^{a_{i,j}}
\ar[dr]_{a_i} &&
A_j \ar@<-1ex>[dl]^{a_j}\\
& A \ar@<2ex>@{-->}[ur]^{f}
&
}
$$
From $a_{i,j}\cdot c = f\cdot a_i \cdot c$ we derive, given $y_k\in FC$ with $x_k=Fc(y_k)$, that
$$
Fa_{i,j} \cdot (x_1) = F a_{i,j} \big(Fc(y_1)\big) =Ff \big(Fa_i (x_1)\big)
$$
and analogously for $x_2$. Thus, $F a_i(x_1)= Fa_i(x_2)$ implies $F a_{i,j} (x_1) = F a_{i,j} (x_2)$, as desired.

\vskip 1mm
(2)  $\Set_{\leq \lambda}$ is closed under existing filtered colimits in $\Set$. It is sufficient to prove this for  existing directed colimits due to Theorem~5 of \cite{AR}. Let $D\colon (I, \leq) \to \Set_{\leq \lambda}$ be a directed diagram with a colimit $a_i\colon A_i \to A$ ($i\in I$) in $\Set_{\leq \lambda}$. We prove  properties (i) and (ii) of Remark~\ref{R:gener}.

Ad(i): for the union $m\colon A'\hookrightarrow A$ of images of all  $a_i$ we are to prove $A'=A$. Factorize $a_i= m\cdot a_i'$ for $ a_i'\colon A_i \to A^\prime$ ($i\in I$), then $(a_i')_{i\in I}$ is clearly a cocone of $D$. Thus, we have $f\colon A\to A^\prime$ with $a_i^\prime = f. d_i$ ($i\in I$). From $(m.f).a_i=a_i$ for all $i\in I$ we deduce $m.f=\id$, thus, $A^\prime =A$.

Ad(ii): given $i\in I$ and elements $x$, $x^\prime \in A$ merged by $a_i$, we prove that they are merged by some connecting map $a_{i,k} \colon A_i\to A_k$ of $D$ ($k\in i$).
Without loss of generality we assume that $i$ is the least element of $I$
(if not, restrict $D$ to the upper set of $i$ which yields a directed diagram
with the same colimit). For every $k\geq i$ put $x_k=a_{i,k}(x)$ and define, given $j\in J$, a map $b_j\colon A_j \to A +\{t\}$ in an element $y\in A_j$ as follows
$$
b_j(y)=\begin{cases} t& \mbox{if} \quad a_{j,k} (x_j) = a_{j,k}(y)\, \mbox{\ for some $k\geq j$}\\
 a_j(y)  &\mbox {else}
 \end{cases}
 $$
 Since $(I, \leq)$ is directed, it is easy to see that this yields a cocone of $D$. We have a unique $f\colon A\to A+\{t\}$ with  $b_i =f.a_i$ ($i\in I$). Clearly $b_i(x) =t$, therefore, $a_i(x) = a_i(x^\prime)$ implies $b_i(x^\prime)=t$. This proves  $a_{i,k} (x) = a_{i,k}(x^\prime)$ for some $k\geq i$, as desired.
 
 \vskip 1mm
 (3) To prove the theorem, it is sufficient to verify for every endofunctor $F$ of $\Set_{\leq \lambda}$ and every object $X$ that every element $a\in FX$  in the image of $Fy$ for some finite subset $y\colon  Y\hookrightarrow X$. Indeed, since filtered colimits are, due to (2), formed as in $\Set$, the fact that $F$ preserves them is then proved completely analogously to (1) above.

First, $F$  preserves colimits of $\omega$-chains: given  an $\omega$-chain $D\colon \omega \to \Set_{\leq \lambda}$ with objects $D_i$ for $i<\omega$
 and  given cardinals $\lambda_0<\lambda_1<\lambda_2 \dots$ with $\lambda =\bigvee\limits_{n<\omega} \lambda_n$, we  define a $\lambda$-chain $\bar D \colon \lambda \to \Set_{\leq\lambda}$ by assigning to $i< \lambda_0$ the value $\bar D_i = D_0$ and to every $i$ with $\lambda_n \leq i< \lambda_{n+1}$ the value $\bar D_i = D_{n+1}$. (Analogously for the connecting morphisms). Then $D$ is cofinal in $\bar D$ (and $FD$ cofinal in $F\bar D$), thus, they have the same  colimits. Since $F$ preserves $\colim \bar D$, it also preserves $\colim D$.
 
 (4)
 For every element $a\in FX$ we are going to prove  that there exists a finite subset $y\colon Y\hookrightarrow X$ with $a\in Fy [FY]$.

 Denote by $\cf$ the collection of all subsets $y\colon Y\hookrightarrow X$ for which $a\in Fy [FY]$. We are to prove that $\cf$ contains a finite member.
Put
$$
Z=\bigcap_{Y \in \cf} Y \quad \mbox{with inclusion}\quad z\colon Z\hookrightarrow X\,.
$$

(a) First, suppose  $Z\in \cf$. We prove that $Z$ is finite, which concludes the proof.

In case $Z$ is infinite, we derive a contradiction. Express $Z$ as a union of a strictly increasing $\omega$-chain $Z=\bigcup\limits_{n<\omega} Z_n$. Obviously $Z_n\notin \cf$ (due to $Z_n\subsetneqq Z$).
 But then $F$ does not preserve the union $z =\bigcup z_n$ of the corresponding inclusion maps $z_n \colon Z_n \hookrightarrow X$ since $a\in Fz [FZ]$ but $a\notin Fz_n [FZ_n]$, a contradiction.
 
 \vskip 1mm
 (b)
 Suppose $Z\notin \cf$. We prove in part (c) that $\cf$ contains sets $V_1$, $V_2$ with $V_1\cap V_2=Z$. Since for the inclusion maps $v_i \colon V_i \to X$ we have  $a\in Fv_i [FV_i]$ but $a\notin Fz[FZ]$, we see that $F$ does not preserve the intersection $z= v_1 \cap v_2$. Thus, $Z=\emptyset$, see Corollary~\ref{C:tr}. Then we 
choose $t\in X$ and 
prove
 $$
 \{t\} \in \cf\,,
 $$
 concluding the proof. Let $h\colon X\to X$ be defined  by
 $$
 h(x) = 
\begin{cases} x & x\in V_1\\ 
t & \mbox{else}\,.
 \end{cases}
 $$
 That is, since $V_1$ and $V_2$ are disjoint, for the constant function $k\colon X\to X$ of value $t$  we have
 $$
 hv_1 = v_1 \quad \mbox{and}\quad  hv_2 = kv_2\,.
 $$
 Then we compute, given $a_i \in FV_i$ with $a= Fv_i(a_i)$:
 $$
 Fk(a) = F(kv_2) (a_2) = F(hv_2) (a_2) = Fh(a)
 $$
 as well as
 $$
 a= Fv_1(a_1) = F(hv_1)(a_1) = Fh(a)\,.
 $$
 Consequently, $Fk(a) =a$, and since for the inclusion $y\colon \{t\} \to X$ we have the following  commutative triangle
 $$
\xymatrix@C=3pc{
& \{t\} \ar [d]^{y}\\
X \ar [ur]^{!} \ar[r]_{k} & X
}
$$
this yields $a\in Fy [F\{t\}]$, as required.

\vskip 2mm
(c)
Assuming that
$$
V_1, V_2 \in \cf \quad \mbox{implies} \quad V_1 \cap V_2 \ne Z\,,
$$
we derive a contradiction. Put
$$
X_0 = X-Z
$$
 and define a filter $\cf_0$ on $X_0$ by
 $$
 Y \in \cf_0 \quad \mbox{iff}\quad Z\cup Y \in \cf\,.
 $$
 $\cf_0$ is closed under super-sets (since $\cf$ is) and  does not contain $\emptyset$ (since $Z\notin \cf$). We verify that it is closed under  finite intersection. Given $Y_1$, $Y_2 \in \cf_0$, then by our assumption  above:
  $$
  (Z\cup Y_1) \cap (Z\cup Y_2) \ne Z\,.
  $$
 This  implies that the sets $V_i = Z\cup Y_i$ are not disjoint. Thus, $F$ preserves the intersection $v_1\cap v_2$, see Corollary~\ref{C:tr}. Consequently,  $a$ lies in the $F$-image of $v_1\cap v_2$, i.e., $V_1\cap V_2 \in \cf$. Since $Y_1 \cap Y_2 = Z\cup (V_1\cap V_2)$, this yields $Y_1\cap Y_2 \in \cf_0$.
  
  By the Maximality Principle the filter $\cf_0$ is contained in an ultrafilter $\EuScript{U}$. This ultrafilter is nonprincipal: for every $x\in X_0$ we know (from our assumption $Z\notin \cf$) that $\{x\} \notin \cf$, hence, $\{x\}\notin \EuScript{U}$. Since $\card X_0$ is not measurable, there exists  by Remark~\ref{R:omega}(c) a collection
  $$
  U_n \in \EuScript{U} \ (n<\omega) \quad \mbox{with}\quad \bigcap_{n<\omega} U_n \notin \EuScript{U}\,.
  $$
  We define an $\omega$-chain $S_k \subseteq X-Z$ ($k<\omega$) of sets that are not members of $\EuScript{U}$ by the following recursion:
  $$
  S_0 =\bigcap_{n<\omega} U_n\,.
  $$
  Given $S_k$, put
$$
  S_{k+1} = S_k \cup (X_0 - U_k)\,.
  $$
  Assuming $S_k \notin \EuScript{U}$ then, since we know that $X_0 - U_k \notin \EuScript{U}$, we get $S_{k+1} \notin \EuScript{U}$ by the fact that $\EuScript{U}$ is an ultrafilter.
  
  We observe that
  $$
   X_0 = \bigcup_{k<\omega} S_k\,.
   $$
 Indeed, every element $x\in X_0$ either lies in $S_0$ or in its complement $\bigcup\limits_{n<\omega} (X - U_n)$. In the latter case we have  $x\in X - U_k\subseteq S_k$ for some $k$. We achieved
  the desired contradiction: $F$ does not preserve the $\omega$-chain colimit $X=\underset{k<\omega}{\colim} (Z \cup S_k)$. Indeed, $Z\cup S_k \notin \cf$, thus, for its embedding $s_k \colon Z\cup S_k \hookrightarrow X$ we have $a\notin Fs_k [F(Z\cup S_k)]$.
 \end{proof}

The argument used in point (1) of the above proof is analogous to that of Theorem 3.10 of \cite{AMSW}.


\section{Conclusions and Open problems} 

We have  presented a number  of categories that are algebraically complete and  cocomplete, i.e., every  endofunctor has a terminal coalgebra and an initial algebra. Examples include (for sufficiently large regular cardinals $\lambda$) the category $\Set_{\leq \lambda}$ of sets of power at most $\lambda$, $\operatorname{\bf Nom}_{\leq \lambda}$ of nominal sets of power at most $\lambda$,
 $K$-$\Vec_{\leq \lambda} $ of vector spaces of  dimension at most $\lambda$, and $G$-$\Set_{\leq \lambda}$ of $G$-sets (where $G$ is a group) of power at most $\lambda$.

All these results assumed the General Continuum Hypothesis. It is an open  question what could be proved  without this assumption. 
However set-theoretical assumptions cannot be avoided: we have seen that the Continuum Hypothesis is  \emph{equivalent}
to the algebraic cocompleteness of the category of sets of power at most $\aleph_1$.
	
We have introduced the concept of a precontinuous set functor encompassing all finitary ones, all continuous ones and composites, products and coproducts of those. For these functors $F$ with  $F\emptyset\ne \emptyset$  we have  presented a sharper result: both $\mu F$ and $\nu F$ carry a canonical partial ordering and these two posets have the same ideal $cpo$-completion. Moreover, by inverting the algebra structure of $\mu F$ we obtain the coalgebra structure of $\nu F$ as the unique continuous extension. In place  of posets and $cpo$'s, we have also got the same result for ultrametric spaces and their Cauchy completions.

For cardinals $\lambda$ of countable cofinality we have proved  that  the category $\Set_{\leq \lambda}$ is not algebraically complete, but  every endofunctor is finitary. The proof used the set-theoretical assumption that no cardinal smaller or equal to $\lambda$ is measurable. It is an open problem whether there are set-theoretical assumptions that would imply the analogous result for other cofinalities than $\aleph_0$.

\bibliographystyle{alpha}
\bibliography{Alco}


 \end{document}